\renewcommand{\labelenumi}{\rm(\theenumi)}
\theoremstyle{plain}
\newtheorem{thm}{Theorem}[section]
\newtheorem{prop}[thm]{Proposition}
\newtheorem{lem}[thm]{Lemma}
\newtheorem{main}{Main Theorem}
\theoremstyle{remark}
\newtheorem{remark}{Remark}
\newtheorem{prm}{Probrem}
\theoremstyle{definition}
\newcommand{\R}{\mathbb{R}}                       
\newcommand{\N}{\mathbb{N}}                       
\newcommand{\Q}{\mathbf{Q}}                       
\newcommand{\s}{\mathbf{s}}                       
\newcommand{\cs}{\mathbf{c}}                      
\newcommand{\cl}{\operatorname{cl}}               
\newcommand{\dist}{\operatorname{dist}}           
\begin{document}

\title[The space of uniformly continuous functions with the $L^p$ norm]{The space consisting of uniformly continuous functions on a metric measure space with the $L^p$ norm}
\author{Katsuhisa Koshino}
\address[Katsuhisa Koshino]{Faculty of Engineering, Kanagawa University, Yokohama, 221-8686, Japan}
\email{ft160229no@kanagawa-u.ac.jp}
\subjclass[2010]{Primary 54C35; Secondary 57N20, 57N17, 46E15, 46E30, 28A20.}
\keywords{$L^p$-space, uniformly continuous, Hilbert cube, pseudo interior, $Z$-set, strongly universal, absorbing set, absolute $F_{\sigma\delta}$-space}
\maketitle

\begin{abstract}
Let $X = (X,d,\mathcal{M},\mu)$ be a metric measure space,
 where $d$ is a metric on $X$,
 $\mathcal{M}$ is a $\sigma$-algebra of $X$,
 and $\mu$ is a measure on $\mathcal{M}$.
Suppose that $X$ is separable and locally compact,
 that $\mathcal{M}$ contains the Borel sets of $X$,
 that for each $E \in \mathcal{M}$, there exists a Borel set $B \subset X$ such that $E \subset B$ and $\mu(B \setminus E) = 0$,
 that for every non-empty open set $U \subset X$, $\mu(U) > 0$,
 that for all compact sets $K \subset X$, $\mu(K) < \infty$,
 and that $X \setminus \{x \in X \mid \{x\} \in \mathcal{M} \text{ and } \mu(\{x\}) = 0\}$ is not dense in $X$.
In this paper, we shall show that the space of real-valued uniformly continuous functions on $X$ with the $L^p$ norm, $1 \leq p < \infty$, is homeomorphic to the subspace consisting of sequences conversing to $0$ in the pseudo interior.
\end{abstract}

\section{Introduction}

Throughout this paper, we assume that spaces are Hausdorff, maps are continuous,
 but functions are not necessarily continuous,
 and $1 \leq p < \infty$.
Let $X = (X,d,\mathcal{M},\mu)$ denote a metric measure space,
 where $d$ is a metric on $X$,
 $\mathcal{M}$ is a $\sigma$-algebra of $X$,
 and $\mu$ is a measure on $\mathcal{M}$.
Denote
 $$X_0 = \{x \in X \mid \{x\} \in \mathcal{M} \text{ and } \mu(\{x\}) = 0\}.$$
A measure space $X$ is called to be \textit{Borel} provided that $\mathcal{M}$ contains the Borel sets of $X$.
We say that a Borel measure space $X$ is \textit{regular} if for each $E \in \mathcal{M}$, there is a Borel set $B \subset X$ such that $E \subset B$ and $\mu(B \setminus E) = 0$.
Notice that the Euclidean space $\R^n = (\R^n,\mathcal{M},\mu)$, where $\mathcal{M}$ is the Lebesgue measurable sets and $\mu$ is the Lebesgue measure,
 is regular Borel.
A regular Borel measure space $X$ satisfies the following stronger condition.
\begin{itemize}
 \item For any $E \in \mathcal{M}$, there exist Borel sets $B_1$ and $B_2$ in $X$ such that $B_2 \subset E \subset B_1$ and $\mu(B_1 \setminus B_2) = 0$.
\end{itemize}
Indeed, if $X$ is regular Borel,
 then for every $E \in \mathcal{M}$, we can find Borel subsets $B_1$ and $C$ of $X$ such that $E \subset B_1$, $X \setminus E \subset C$, $\mu(B_1 \setminus E) = 0$ and $\mu(C \setminus (X \setminus E)) = 0$.
Let $B_2 = X \setminus C$,
 so
 $$\mu(B_1 \setminus B_2) = \mu(B_1 \setminus E) + \mu(E \setminus B_2) = \mu(B_1 \setminus E) + \mu(C \setminus (X \setminus E)) = 0.$$

We write the integral of a real-valued $\mathcal{M}$-measurable function $f(x)$ on $E \in \mathcal{M}$ with respect to $\mu$ as $\int_E f(x) d\mu(x)$.
Set
 $$L^p(X) = \bigg\{f : X \to \R \ \bigg| \ f \text{ is $\mathcal{M}$-measurable and } \int_X |f(x)|^p d\mu(x) < \infty\bigg\}$$
 endowed with the following norm
 $$\|f\|_p = \bigg(\int_X |f(x)|^p d\mu(x)\bigg)^{1/p},$$
 where two functions that are coincident almost everywhere are identified.
Recall that a property holds \textit{almost everywhere} if it holds on $X \setminus E$ for some $E \in \mathcal{M}$ with $\mu(E) = 0$.
The function space $L^p(X)$ is a Banach space, refer to \cite[Theorem~4.8]{Br}.
It is said that $X$ is \textit{$\sigma$-finite} if it can be written as a countable union of measurable sets of finite measure.
When $X$ is a separable $\sigma$-finite regular Borel metric measure space,
 $L^p(X)$ is also separable, see \cite[Proposition~3.4.5]{Coh}.
If for any $n \in \N$, there is a pairwise disjoint family $\{E_i\}_{1 \leq i \leq n} \subset \mathcal{M}$ such that each $\mu(E_i) > 0$,
 then $L^p(X)$ is infinite-dimensional.
Hnece in the case that $X$ is infinite, $\mathcal{M}$ contains the open subsets of $X$ and for each non-empty open set $U \subset X$, $\mu(U) > 0$, the space $L^p(X)$ is infinite-dimensional.
Denote the Hilbert cube by $\Q = [-1,1]^\N$ and the pseudo interior by $\s = (-1,1)^\N$.
In the theory of infinite-dimensional topology, typical infinite-dimensional spaces, for example subspaces of $\Q$, have been detected among function spaces.
Due to the efforts of R.D.~Anderson \cite{Ande1} and M.I.~Kadec \cite{Kad}, we have the following:

\begin{thm}\label{Lp}
Let $X$ be an infinite separable $\sigma$-finite regular Borel metric measure space.
Suppose that for any non-empty open subset $U$ of $X$, the measure $\mu(U) > 0$.
Then $L^p(X)$ is an infinite-dimensional separable Banach space,
 so it is homeomorphic to $\s$.
\end{thm}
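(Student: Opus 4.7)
The plan is to verify that $L^p(X)$ meets the three hypotheses of the Anderson--Kadec theorem, from which the conclusion that $L^p(X)$ is homeomorphic to $\s$ is immediate, because $\R^{\N}$, $\ell^{2}$, and $\s = (-1,1)^{\N}$ are all mutually homeomorphic (an explicit identification $\R \approx (-1,1)$ gives $\R^{\N} \approx \s$).

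Two of the three hypotheses have already been recorded in the introduction. That $L^p(X)$ is a Banach space---in particular a completely metrizable locally convex topological vector space, i.e.\ a Fr\'echet space---is \cite[Theorem~4.8]{Br}; and separability of $L^p(X)$ follows from \cite[Proposition~3.4.5]{Coh} because $X$ is separable, $\sigma$-finite, and regular Borel. So the only thing genuinely to be checked is infinite-dimensionality, and the introduction has already reduced this to exhibiting, for each $n \in \N$, a pairwise disjoint family $\{E_i\}_{1 \leq i \leq n} \subset \mathcal{M}$ with $\mu(E_i) > 0$, since then the characteristic functions $\chi_{E_1}, \dots, \chi_{E_n}$ are $n$ linearly independent elements of $L^p(X)$ (after first trimming each $E_i$ to a measurable subset of finite positive measure using $\sigma$-finiteness, to ensure $\chi_{E_i} \in L^p(X)$).

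To produce such a family, I would first exhibit $n$ pairwise disjoint non-empty open subsets $U_1, \dots, U_n$ of $X$. Since $X$ is an infinite metric space, either every point of $X$ is isolated---so $X$ contains infinitely many open singletons and the task is trivial---or $X$ has an accumulation point $x^\ast$, around which I extract pairwise disjoint open balls along a sequence converging to $x^\ast$ (choosing a suitably sparse subsequence so that the balls, with radii less than half the distances to $x^\ast$, are disjoint). Each $U_i$ is measurable because $X$ is Borel and has $\mu(U_i) > 0$ by the positivity hypothesis on non-empty open sets, and $\sigma$-finiteness furnishes $E_i \subset U_i$ with $0 < \mu(E_i) < \infty$. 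Once the three hypotheses are in place, the Anderson--Kadec theorem delivers the claimed homeomorphism $L^p(X) \approx \s$. There is no real obstacle here: the substantive work is entirely absorbed into the cited results, and the only small task is the elementary extraction of disjoint open balls in the accumulation-point case.
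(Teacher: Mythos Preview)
Your proposal is correct and matches the paper's own treatment: the paper does not give a separate proof of this theorem but simply records in the introduction that $L^p(X)$ is Banach \cite[Theorem~4.8]{Br}, separable \cite[Proposition~3.4.5]{Coh}, and infinite-dimensional (via disjoint measurable sets of positive measure), and then attributes the homeomorphism with $\s$ to Anderson and Kadec. Your write-up makes exactly this argument explicit; the only remark is that the accumulation-point case analysis is unnecessary, since for any $n$ distinct points $x_1,\dots,x_n$ in a metric space the balls $B(x_i,r)$ with $r=\tfrac13\min_{i\neq j}d(x_i,x_j)$ are already pairwise disjoint non-empty open sets.
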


In this paper, the topological type of the subspace
 $$C_u(X) = \{f \in L^p(X) \mid f \text{ is uniformly continuous}\}\footnote{Recall that for a property $P$ of functions, a function $g \in \{f \in L^p(X) \mid f \text{ satisfies the property } P\}$ if there exists $f : X \to \R$ such that $f$ satisfies the property $P$ and $g = f$ almost everywhere.}$$
 will be studied.
When $X$ is compact,
 $C_u(X)$ is coincident with the space
 $$C(X) = \{f \in L^p(X) \mid f \text{ is continuous}\}.$$
It is known that several function spaces are homeomorphic to the following subspace of $\s$,
 $$\cs_0 = \Big\{(x(n))_{n \in \N} \in \s \ \Big| \ \lim_{n \to \infty} x(n) = 0\Big\},$$
 refer to \cite{Mil3,YZ,YSK}.
R.~Cauty \cite{Ca} proved the next theorem.

\begin{thm}
Let $[0,1] = ([0,1],d,\mathcal{M},\mu)$ be the closed unit interval,
 where $d$ is the usual metric,
 $\mathcal{M}$ is the Lebesgue measurable sets,
 and $\mu$ is the Lebesgue measure.
Then $C([0,1])$ is homeomorphic to $\cs_0$.
\end{thm}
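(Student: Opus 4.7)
My plan is to realize $C([0,1])$ as an $\mathcal{M}_2$-absorbing set in $L^p([0,1])$ and to apply the Dobrowolski--Mogilski characterization of $\cs_0$ as the unique (up to homeomorphism) such absorbing set: a subspace $A$ of an infinite-dimensional separable Fr\'echet space $E$ is homeomorphic to $\cs_0$ provided (i) $A$ is an absolute $F_{\sigma\delta}$-space, (ii) $A$ is a $\sigma Z$-set in $E$, and (iii) $A$ is strongly $\mathcal{M}_2$-universal in $E$. Since by Theorem~\ref{Lp} the ambient $L^p([0,1])$ is homeomorphic to $\s$, it suffices to verify (i)--(iii) for $C([0,1]) \subset L^p([0,1])$.

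For (i), I would write
$$C([0,1]) = \bigcap_{k \in \N}\bigcup_{n \in \N} G_{k,n},$$
where $G_{k,n}$ denotes the set of classes $f \in L^p([0,1])$ satisfying $|f(x)-f(y)| \leq 1/k$ for almost every pair $(x,y) \in [0,1]^2$ with $|x-y| \leq 1/n$. Each $G_{k,n}$ is closed in $L^p$ because the a.e.\ modulus inequality passes to $L^p$-limits after extracting an a.e.\ convergent subsequence. Any $f \in \bigcap_k \bigcup_n G_{k,n}$ is essentially uniformly continuous: a common full-measure set $E$ on which the modulus bounds hold for every $k$ makes $f|_E$ uniformly continuous in the classical sense, so $f|_E$ extends uniquely to a continuous representative of $[f]$. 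Conversely, every continuous function on the compact interval $[0,1]$ is uniformly continuous and lies in $G_{k,n(k)}$ for each $k$. This exhibits $C([0,1])$ as an $F_{\sigma\delta}$-subset of the Polish space $L^p([0,1])$, establishing absolute $F_{\sigma\delta}$-ness.

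For (ii), each $G_{k,n}$ is closed and nowhere dense in $L^p$ (generic $L^p$-functions violate any a.e.\ modulus bound), and a standard perturbation argument---adding arbitrarily small-$L^p$-norm functions of prescribed large oscillation, such as compactly supported sharply-sloped bumps---shows that each $G_{k,n}$ is in fact a $Z$-set of $L^p([0,1])$; combined with $C([0,1]) \subset \bigcup_n G_{1,n}$ this provides the required $\sigma Z$-presentation. The main obstacle is the verification of (iii), strong $\mathcal{M}_2$-universality of $C([0,1])$ in $L^p([0,1])$. Given an absolute $F_{\sigma\delta}$-space $Y$, a closed subset $D \subset Y$, an open cover $\mathcal{U}$ of $L^p([0,1])$, and a map $f: Y \to L^p([0,1])$ whose restriction to $D$ is a closed $Z$-embedding into $C([0,1])$, one must construct a $\mathcal{U}$-close closed embedding $\tilde f: Y \to L^p([0,1])$ whose image lies in $C([0,1])$ and which agrees with $f$ on $D$. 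The technique, following Cauty, is to perturb $f$ off a neighborhood of $D$ by adding small continuous bumps---piecewise-linear tents supported on pairwise disjoint short subintervals of $[0,1]$, parametrized continuously by a countable coordinate system on $Y$---whose $L^p$-norm is arbitrarily small yet which furnish enough independent degrees of freedom to code an embedding of $Y$ into $C([0,1])$ avoiding any prescribed $Z$-set. A Bing shrinking argument then guarantees injectivity and closedness. Once (i)--(iii) are established, the characterization theorem yields $C([0,1]) \cong \cs_0$.
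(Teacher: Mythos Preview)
Your overall strategy—verifying that $C([0,1])$ is an $\mathfrak{M}_2$-absorbing set in $L^p([0,1])\cong\s$ and invoking the Bestvina--Mogilski uniqueness theorem—is precisely how the paper proves its Main Theorem, of which Cauty's result is the special case $X=[0,1]$. Your part (i) is the paper's Proposition~\ref{M2} with $G_{k,n}=A(1/k,1/n)$. Your part (ii), showing directly that each $G_{1,n}$ is a $Z$-set, differs from the paper's route through the sets $Z(n,U)$ in Proposition~\ref{Zsigma}; your variant is plausible and simpler for $[0,1]$, though the ``standard perturbation'' you invoke in fact needs something like the digging construction of Lemma~\ref{dig}: one must first zero out a neighbourhood of a point before inserting a sharp step, since adding a fixed oscillatory bump to an arbitrary $f\in L^p$ need not force $f$ out of $G_{1,n}$.

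There are two genuine gaps. First, you omit homotopy density of $C([0,1])$ in $L^p$, which is part of the definition of an absorbing set and is not implied by your (i)--(iii); the paper supplies this separately as Proposition~\ref{homot.dense} (here trivially, since $C([0,1])$ is dense and convex). Second, and more seriously, your part (iii) is only a list of ingredients (``piecewise-linear tents\ldots Bing shrinking''), not a construction, and your statement of what must be proved is muddled: for strong $\mathfrak{M}_2$-universality of $C([0,1])$ the map $f$ and the cover $\mathcal{U}$ should refer to $C([0,1])$, not $L^p$, and the output must be a $Z$-embedding into $C([0,1])$. The paper handles this step quite differently: it establishes strong $(\mathfrak{M}_0,\mathfrak{M}_2)$-universality of the \emph{pair} $(L^p(X),C_u(X))$ via an explicit coding map (Lemma~\ref{div.}) that converts an embedding $g:A\hookrightarrow\s$ into an injection $\Phi:A\to L^p$ whose values are continuous at a chosen accumulation point $x_\infty$ exactly when $g(a)\in\cs_1$; strong $\mathfrak{M}_2$-universality of $C_u(X)$ then follows via Proposition~\ref{str.univ.}. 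This encoding is the technical heart of the argument and is entirely absent from your proposal.
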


More generally, we shall show the following:

\begin{main}
Let $X$ be a separable locally compact regular Borel metric measure space.
Suppose that for every non-empty open set $U \subset X$, $\mu(U) > 0$,
 that for each compact set $K \subset X$, $\mu(K) < \infty$,
 and that $X \setminus X_0$ is not dense in $X$.
Then $C_u(X)$ is homeomorphic to $\cs_0$.
\end{main}

\section{Preliminaries}

For each point $x \in X$ and each positive number $\delta > 0$, put the open ball $B(x,\delta) = \{y \in X \mid d(x,y) < \delta\}$.
Given subsets $A, B \subset L^p(X)$, we denote their distance by $\dist(A,B) = \inf_{f \in A, g \in B} \|f - g\|_p$.
For spaces $A \subset Y$, the symbol $\cl_Y{A}$ stands for the closure of $A$ in $Y$.
Recall that for functions $f : Z \to Y$ and $g : Z \to Y$, and for an open cover $\mathcal{U}$ of $Y$, $f$ is \textit{$\mathcal{U}$-close} to $g$ provided that for each $z \in Z$, there exists an open set $U \in \mathcal{U}$ such that the doubleton $\{f(z), g(z)\} \subset U$.
We call a closed set $A$ in a space $Y$ a \textit{$Z$-set} in $Y$ if for each open cover $\mathcal{U}$ of $Y$, there exists a map $f : Y \to Y$ such that $f$ is $\mathcal{U}$-close to the identity map of $Y$ and the image $f(Y)$ misses $A$.
A \textit{$Z_\sigma$-set} is a countable union of $Z$-sets.
A map $f : Z \to Y$ is called to be a \textit{$Z$-embedding} if $f$ is an embedding and $f(Z)$ is a $Z$-set in $Y$.
Given a class $\mathfrak{C}$ of spaces, we say that $Y$ is \textit{strongly $\mathfrak{C}$-universal} if the following condition is satisfied.
\begin{itemize}
 \item Let $A \in \mathfrak{C}$ and $f : A \to Y$ be a map.
 Suppose that $B$ is a closed set in $A$ and the restriction $f|_B$ is a $Z$-embedding.
 Then for each open cover $\mathcal{U}$ of $Y$, there is a $Z$-embedding $g : A \to Y$ such that $g$ is $\mathcal{U}$-close to $f$ and $g|_B = f|_B$.
\end{itemize}
For spaces $Y \subset M$, $Y$ is \textit{homotopy dense} in $M$ if $M$ admits a homotopy $h : M \times [0,1] \to M$ such that $h(M \times (0,1]) \subset Y$ and $h(y,0) = y$ for every $y \in M$.
For a class $\mathfrak{C}$, let $\mathfrak{C}_\sigma$ be the class of spaces written as a countable union of closed subspaces that belong to $\mathfrak{C}$.
A space $Y$ is said to be a \textit{$\mathfrak{C}$-absorbing set} in $M$ provided that it satisfies the following conditions.
\begin{enumerate}
 \item $Y \in \mathfrak{C}_\sigma$ and is homotopy dense in $M$.
 \item $Y$ is strongly $\mathfrak{C}$-universal.
 \item $Y$ is contained in a $Z_\sigma$-set in $M$.
\end{enumerate}
Let $\mathfrak{M}_2$ be the class of \textit{absolute $F_{\sigma\delta}$-spaces},
 that is, $Y \in \mathfrak{M}_2$ if $Y$ is metrizable and is an $F_{\sigma\delta}$-set in any metrizable space $M$ containing $Y$ as a subspace.
The space $\cs_0$ is an $\mathfrak{M}_2$-absorbing set in $\s$.
According to Theorem~3.1 of \cite{BeMo}, we can establish the following:

\begin{thm}\label{abs.}
Let $Y$ and $Z$ be an $\mathfrak{M}_2$-absorbing set in $\s$.
Then $Y$ and $Z$ are homeomorphic.
\end{thm}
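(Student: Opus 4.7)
The plan is to obtain Theorem~\ref{abs.} as a direct application of Theorem~3.1 of \cite{BeMo}, which is the general uniqueness principle asserting that any two $\mathfrak{C}$-absorbing sets in a fixed ANR $M$ are homeomorphic, provided $\mathfrak{C}$ is a topologically well-behaved class and $M$ is sufficiently nice. The proof thus reduces to verifying hypotheses, and the substantive topological work is entirely encapsulated in the cited theorem.

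First I would check that the ambient and the class meet the requisite axioms. The pseudo interior $\s$ is homeomorphic to a separable Hilbert space and hence a separable metrizable ANR, which is the ambient framework Bestvina--Mogilski require. The class $\mathfrak{M}_2$ of absolute $F_{\sigma\delta}$-spaces is topological by definition, is hereditary with respect to closed subspaces (a closed subset of an absolute $F_{\sigma\delta}$-space is again absolute $F_{\sigma\delta}$), and is closed under finite disjoint sums and under multiplication by compacta; these are the standard ``good class'' axioms under which the Bestvina--Mogilski machinery operates. Next, by the very definition of $\mathfrak{M}_2$-absorbing set, each of $Y$ and $Z$ simultaneously belongs to $\mathfrak{M}_{2\sigma}$ and is homotopy dense in $\s$, is strongly $\mathfrak{M}_2$-universal, and is contained in a $Z_\sigma$-subset of $\s$. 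These are precisely the three inputs consumed by \cite[Theorem~3.1]{BeMo}. Applying that theorem with $M = \s$ and $\mathfrak{C} = \mathfrak{M}_2$ produces a homeomorphism $h : Y \to Z$, which can moreover be made $\mathcal{U}$-close to the inclusion for any prescribed open cover $\mathcal{U}$ of $\s$.

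The bulk of the argument, relegated to the citation, is a Bessaga--Pe\l czy\'nski style back-and-forth limit construction: write $Y = \bigcup_{n \in \N} Y_n$ and $Z = \bigcup_{n \in \N} Z_n$ as countable unions of closed $\mathfrak{M}_2$-sets, and alternate between using homotopy density of $Z$ in $\s$ to push an inclusion $Y_n \hookrightarrow \s$ into $Z$, and using strong $\mathfrak{M}_2$-universality of $Z$ to upgrade that map to a $Z$-embedding agreeing with the previously constructed partial homeomorphism on the relevant closed subset. The $Z_\sigma$-containment lets each new embedding be displaced off the image of the prior one, and careful uniform error control forces the direct limit of the resulting partial maps to be a genuine homeomorphism. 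This convergence control is the main technical obstacle; once \cite[Theorem~3.1]{BeMo} is invoked as a black box, Theorem~\ref{abs.} is immediate from the verifications above.
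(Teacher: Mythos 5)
Your proposal matches the paper exactly: the paper gives no argument beyond citing Theorem~3.1 of \cite{BeMo}, which is precisely the reduction you make, and your hypothesis checks (that $\s$ is the required ambient space and that the three conditions in the definition of an $\mathfrak{M}_2$-absorbing set are exactly the inputs of the cited theorem) are the correct ones. Your sketch of the underlying back-and-forth construction is accurate background but, as in the paper, not needed once the citation is invoked.
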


\section{The Borel complexity of $C_u(X)$ in $L^p(X)$}

In this section, we will show that $C_u(X) \in \mathfrak{M}_2$.
The following proposition is of use, refer to Theorem~4.9 of \cite{Br}.

\begin{prop}\label{conv.}
Let $f, f_k \in L^p(X)$, $k \in \N$.
If $\|f - f_k\|_p \to 0$,
 then there exists a subsequence $\{f_{k(n)}\}$ such that $f_{k(n)} \to f$ almost everywhere.
\end{prop}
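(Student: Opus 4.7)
The plan is to extract a subsequence converging so rapidly in $L^p$-norm that the series of pointwise differences $|f_{k(n)} - f|$ is itself an $L^p$-function, hence finite almost everywhere, after which termwise convergence of that series at such points gives the desired a.e.\ convergence.

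First, since $\|f - f_k\|_p \to 0$, I would inductively pick indices $k(1) < k(2) < \cdots$ such that
$$\|f - f_{k(n)}\|_p \leq 2^{-n} \quad \text{for every } n \in \N.$$
Next, for each $N \in \N$, set $g_N(x) = \sum_{n=1}^{N} |f_{k(n)}(x) - f(x)|$. By Minkowski's inequality,
$$\|g_N\|_p \leq \sum_{n=1}^{N} \|f_{k(n)} - f\|_p \leq 1.$$

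Then, define the extended real-valued function $g(x) = \sum_{n=1}^{\infty} |f_{k(n)}(x) - f(x)|$. Since $\{g_N^p\}$ is a nondecreasing sequence of nonnegative $\mathcal{M}$-measurable functions with pointwise limit $g^p$, the monotone convergence theorem yields
$$\int_X g(x)^p \, d\mu(x) = \lim_{N \to \infty} \int_X g_N(x)^p \, d\mu(x) \leq 1 < \infty.$$
In particular, $g(x) < \infty$ on the complement of some $E \in \mathcal{M}$ with $\mu(E) = 0$. At every $x \in X \setminus E$, the series $\sum_{n=1}^{\infty} |f_{k(n)}(x) - f(x)|$ converges, so its general term tends to $0$, that is, $f_{k(n)}(x) \to f(x)$. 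This yields the desired a.e.\ convergence.

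There is no genuine obstacle here, as this is a classical by-product of the completeness of $L^p(X)$; the only mild subtlety is ensuring the monotone-convergence step is applied to the correct sequence, namely $\{g_N^p\}$ rather than $\{g_N\}$, so that the $L^p$-norm bound from Minkowski's inequality translates into integrability of the limiting series.
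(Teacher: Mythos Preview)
Your proof is correct and is precisely the classical argument: fast subsequence, Minkowski bound on the partial sums, monotone convergence applied to $g_N^p$, and finiteness of the limiting series a.e. The paper itself does not prove this proposition but merely cites Theorem~4.9 of Brezis; your argument is essentially the proof given there, so there is nothing to compare.
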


For all positive numbers $\epsilon, \delta > 0$, let
 $$A(\epsilon,\delta) = \{f \in L^p(X) \mid \text{for almost every } x, y \in X, \text{ if } d(x,y) < \delta,
 \text{ then } |f(x) - f(y)| \leq \epsilon\}.$$

\begin{lem}\label{cld.}
For any $\epsilon, \delta > 0$, the subset $A(\epsilon,\delta)$ is closed in $L^p(X)$.
\end{lem}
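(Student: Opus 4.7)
The plan is to take a sequence $f_k \in A(\epsilon,\delta)$ with $\|f_k - f\|_p \to 0$ and show that $f \in A(\epsilon,\delta)$, which suffices for closedness. Since $L^p$-convergence need not yield pointwise convergence, I would invoke Proposition~\ref{conv.} to extract a subsequence $f_{k(n)}$ and a null set $E \in \mathcal{M}$ with $\mu(E) = 0$ such that $f_{k(n)}(x) \to f(x)$ for every $x \in X \setminus E$.

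Next, I would unpack the meaning of membership in $A(\epsilon,\delta)$. For each $n$, since $f_{k(n)} \in A(\epsilon,\delta)$, there is a set $N_n \in \mathcal{M}$ with $\mu(N_n) = 0$ such that whenever $x, y \in X \setminus N_n$ satisfy $d(x,y) < \delta$, one has $|f_{k(n)}(x) - f_{k(n)}(y)| \leq \epsilon$. Setting $N = E \cup \bigcup_{n \in \N} N_n$ gives a measurable null set, and for any $x, y \in X \setminus N$ with $d(x,y) < \delta$, the inequality $|f_{k(n)}(x) - f_{k(n)}(y)| \leq \epsilon$ holds for every $n$. Passing to the limit and using that both $f_{k(n)}(x) \to f(x)$ and $f_{k(n)}(y) \to f(y)$, we obtain $|f(x) - f(y)| \leq \epsilon$, which shows $f \in A(\epsilon,\delta)$.

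The only subtle point, and the step I would be most careful to write out, is the correct reading of the quantifier ``for almost every $x, y \in X$.'' I would interpret it in the standard way for an $L^p$-representative: the existence of a single measurable null set $N \subset X$ outside of which the Lipschitz-type inequality holds for all pairs. Under this interpretation the countable union $N = E \cup \bigcup_n N_n$ remains null, and the argument goes through cleanly. If instead one tried to use a product-measure interpretation on $X \times X$ (via Fubini or a completion argument), the bookkeeping would become the main obstacle; but neither the regularity of $\mu$ nor $\sigma$-finiteness is needed under the pointwise-a.e.\ reading, so I expect the proof to be short and routine once this interpretation is fixed.
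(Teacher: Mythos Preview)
Your proposal is correct and follows essentially the same approach as the paper: pass to an a.e.-convergent subsequence via Proposition~\ref{conv.}, take the countable union of the exceptional null sets (one for the a.e.\ convergence and one for each $f_{k(n)}\in A(\epsilon,\delta)$), and pass to the limit in the inequality off that union. Your reading of ``for almost every $x,y\in X$'' as a single null set in $X$ is exactly the interpretation the paper uses.
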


\begin{proof}
To prove that $A(\epsilon,\delta)$ is closed in $L^p(X)$, fix any sequence $\{f_k\}$ in $A(\epsilon,\delta)$ convergeing to $f \in L^p(X)$.
We need only to show that $f \in A(\epsilon,\delta)$,
 that is, for almost every $x, y \in X$, if $d(x,y) < \delta$,
 then $|f(x) - f(y)| \leq \epsilon$.
Since $\|f - f_k\|_p \to 0$,
 we can replace $\{f_k\}$ with a subsequence so that $f_k \to f$ almost everywhere by Proposition~\ref{conv.}.
Then there exists $E_0 \subset X$ such that $\mu(E_0) = 0$ and $f_k(x) \to f(x)$ for each $x \in X \setminus E_0$.
On the other hand, because $f_k \in A(\epsilon,\delta)$ for each $k \in \N$, we can find $E_k \subset X$ with $\mu(E_k) = 0$ so that for any $x, y \in X \setminus E_k$, if $d(x,y) < \delta$,
 then $|f_k(x) - f_k(y)| \leq \epsilon$.
Let $E = \bigcup_{k \in \N \cup \{0\}} E_k$ and take any $x, y \in X \setminus E$ with $d(x,y) < \delta$.
Note that $\mu(E) = 0$.
Then $|f_k(x) - f_k(y)| \leq \epsilon$ for every $k \in \N$, $f_k(x) \to f(x)$ and $f_k(y) \to f(y)$,
 which implies that $|f(x) - f(y)| \leq \epsilon$.
We conclude that $f \in A(\epsilon,\delta)$.
\end{proof}

Note that a space $Y \in \mathfrak{M}_2$ if and only if $Y$ can be embedded into a completely metrizable space as an $F_{\sigma\delta}$-set,
 see \cite[Theorem~9.6]{AN}.
We prove the following:

\begin{prop}\label{M2}
Suppose that for each $E \in \mathcal{M}$ with $\mu(E) = 0$, $X \setminus E$ is dense in $X$.
Then $C_u(X)$ is an $F_{\sigma\delta}$-set in $L^p(X)$,
 and hence $C_u(X) \in \mathfrak{M}_2$.
\end{prop}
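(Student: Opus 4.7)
The plan is to represent $C_u(X)$ as
$$C_u(X) = \bigcap_{n \in \N} \bigcup_{m \in \N} A(1/n, 1/m),$$
which in view of Lemma~\ref{cld.} displays $C_u(X)$ as an $F_{\sigma\delta}$-subset of the completely metrizable Banach space $L^p(X)$. Combined with the characterization of $\mathfrak{M}_2$ cited from \cite[Theorem~9.6]{AN}, this will yield $C_u(X) \in \mathfrak{M}_2$.

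First I would verify the inclusion $\subseteq$, which is essentially by definition: if $f \in C_u(X)$ has a uniformly continuous representative $\tilde{f}$, then for each $n \in \N$ one picks $\delta > 0$ witnessing uniform continuity at precision $1/n$ and chooses $m$ with $1/m \le \delta$; then $f \in A(1/n,1/m)$.

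The reverse inclusion is the substantive step. Given $f$ in the intersection, I would choose for each $n$ an index $m_n$ with $f \in A(1/n,1/m_n)$, together with a null set $E_n \in \mathcal{M}$ such that every pair $x,y \in X \setminus E_n$ with $d(x,y) < 1/m_n$ satisfies $|f(x)-f(y)| \le 1/n$. Setting $E = \bigcup_n E_n$, the hypothesis forces $X \setminus E$ to be dense in $X$, and by construction $f|_{X \setminus E}$ is uniformly continuous. Since $\R$ is complete, the standard extension theorem for uniformly continuous maps from a dense subset yields a uniformly continuous $\tilde{f} : X \to \R$ coinciding with $f$ on $X \setminus E$, hence almost everywhere. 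The function $\tilde{f}$ is continuous, a fortiori $\mathcal{M}$-measurable, with $\|\tilde{f}\|_p = \|f\|_p < \infty$, so $\tilde{f}$ is a uniformly continuous representative of $f$ and $f \in C_u(X)$.

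The main obstacle I anticipate is precisely this upgrade from the a.e.\ uniform continuity encoded by the sets $A(1/n,1/m)$ to genuine uniform continuity on all of $X$. This is where the hypothesis that complements of null sets are dense becomes indispensable, and where completeness of $\R$ enters to permit the uniformly continuous extension from $X \setminus E$ to $X$; the rest of the argument is essentially bookkeeping of measurability and of the $L^p$ identification of a.e.\ equal functions.
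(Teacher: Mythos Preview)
Your proposal is correct and follows essentially the same route as the paper: both prove the identity $C_u(X)=\bigcap_{n}\bigcup_{m}A(1/n,1/m)$, dispatch the inclusion $\subset$ as immediate, and for the reverse inclusion collect the null sets $E_n$ into a single null set $E$, observe that $f|_{X\setminus E}$ is uniformly continuous, and extend it from the dense subset $X\setminus E$ to a uniformly continuous representative on all of $X$. Your write-up is slightly more explicit (naming the completeness of $\R$ for the extension and checking $\tilde f\in L^p(X)$), but the argument is the same.
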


\begin{proof}
We shall show that $C_u(X) = \bigcap_{n \in \N} \bigcup_{m \in \N} A(1/n,1/m)$.
Then it follows from Lemma~\ref{cld.} that $C_u(X)$ is $F_{\sigma\delta}$ in $L^p(X)$.
Clearly, $C_u(X) \subset \bigcap_{n \in \N} \bigcup_{m \in \N} A(1/n,1/m)$.
To prove that $C_u(X) \supset \bigcap_{n \in \N} \bigcup_{m \in \N} A(1/n,1/m)$, fix any function $f \in \bigcap_{n \in \N} \bigcup_{m \in \N} A(1/n,1/m)$.
For each $n \in \N$, there exist $m_n \in \N$ and $E_n \subset X$ with $\mu(E_n) = 0$ such that if $x, y \in X \setminus E_n$ and $d(x,y) < 1/m_n$,
 then $|f(x) - f(y)| \leq 1/n$.
Let $E = \bigcup_{n \in \N} E_n$,
 so the measure $\mu(E) = 0$.
Then for any $n \in \N$ and $x, y \in X \setminus E$ with $d(x,y) < 1/m_n$,
 we have $|f(x) - f(y)| \leq 1/n$,
 which implies that $f|_{X \setminus E}$ is uniformly continuous.
Since $X \setminus E$ is dense in $X$,
 the restriction $f|_{X \setminus E}$ can be extended over $X$ as a uniformly continuous map,
 that is coincident with $f$ almost everywhere.
Therefore $f \in C_u(X)$.
The proof is complete.
\end{proof}

\section{The $Z_\sigma$-set property of $C_u(X)$ in $L^p(X)$}

In this section, it is shown that $C_u(X)$ is contained in some $Z_\sigma$-set in $L^p(X)$.
The next theorem is important on convergence of sequences in $L^p(X)$,
 refer to Theorem~4.2 of \cite{Br}.\footnote{This is valid for any $p \in [1,\infty)$.}

\begin{thm}[The Dominated Convergence Theorem]\label{dom.conv.}
Let $f, f_k \in L^1(X)$, $k \in \N$.
Suppose that $f_k(x) \to f(x)$ for almost every $x \in X$,
 and that there is a function $g \in L^p(X)$ such that for any $k \in \N$, $|f_k(x)| \leq g(x)$ for almost every $x \in X$.
Then $f, f_k \in L^p(X)$, $k \in \N$,
 and $\|f - f_k\|_p \to 0$.
\end{thm}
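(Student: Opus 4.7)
The plan is to reduce the $L^p$ assertion to the $L^1$ Dominated Convergence Theorem already recorded in \cite[Theorem~4.2]{Br}, exactly as the footnote hints.

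First I would combine the countably many $\mu$-null exceptional sets (those on which $f_k(x) \to f(x)$ fails, together with the ones on which some $|f_k(x)| \leq g(x)$ fails) into a single $\mathcal{M}$-measurable set $E$ with $\mu(E) = 0$. Off $E$, passing to the pointwise limit gives $|f(x)| \leq g(x)$, hence $|f|^p \leq g^p$ almost everywhere; since $g \in L^p(X)$, integrating shows $f \in L^p(X)$, and the same bound gives $f_k \in L^p(X)$ for all $k$.

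Next, set $h_k(x) = |f(x) - f_k(x)|^p$. Off $E$ we have $h_k(x) \to 0$ pointwise, and
$$h_k(x) \leq (|f(x)| + |f_k(x)|)^p \leq (2g(x))^p = 2^p g(x)^p,$$
where $2^p g^p \in L^1(X)$ because $g \in L^p(X)$. Applying the $L^1$ Dominated Convergence Theorem to the sequence $(h_k)$ with dominating function $2^p g^p$ then yields $\int_X h_k(x) d\mu(x) \to 0$, which is precisely $\|f - f_k\|_p \to 0$.

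The only delicate point is the bookkeeping of the null sets so that the pointwise convergence $f_k \to f$ and the pointwise domination $|f_k| \leq g$ hold simultaneously off a single $\mu$-null set, which is what legitimizes both the identification $|f| \leq g$ a.e.\ and the application of the $L^1$ theorem to $h_k$; once that is arranged the deduction is purely mechanical, relying only on monotonicity of the integral and the elementary bound $|f - f_k| \leq |f| + |f_k|$.
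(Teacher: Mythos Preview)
Your argument is correct and is the standard reduction of the $L^p$ statement to the $L^1$ Dominated Convergence Theorem. The paper gives no proof of its own for this result: it simply records it with a reference to \cite[Theorem~4.2]{Br} and a footnote remarking that the statement holds for all $p \in [1,\infty)$, so there is nothing further to compare.
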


The following technical lemma will be very useful for detecting $Z$-sets in $L^p(X)$.

\begin{lem}\label{dig}
Let $Y$ be a paracompact space, $\phi : Y \to L^p(X)$ be a map,
 and $a \in X_0$ such that for all $\lambda > 0$, $B(a,\lambda) \in \mathcal{M}$.
Then for every map $\epsilon : Y \to (0,1)$, there exist maps $\psi : Y \to L^p(X)$ and $\delta : Y \to (0,1)$ such that for each $y \in Y$,
\begin{enumerate}
 \renewcommand{\labelenumi}{(\roman{enumi})}
 \item $\|\phi(y) - \psi(y)\|_p \leq \epsilon(y)$,
 \item $\psi(y)(B(a,\delta(y))) = \{0\}$.
\end{enumerate}
\end{lem}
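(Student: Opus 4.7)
The plan is to look for $\psi$ of the form $\psi(y)(x) = \phi(y)(x) \cdot \rho(d(x,a)/\delta(y))$, where $\rho \colon [0,\infty) \to [0,1]$ is a fixed continuous cutoff with $\rho(t) = 0$ for $t \leq 1$ and $\rho(t) = 1$ for $t \geq 2$. Then condition (ii) is automatic, since $\rho(d(x,a)/\delta(y))$ vanishes on $B(a,\delta(y))$, and the whole problem reduces to making a continuous choice of $\delta \colon Y \to (0,1)$ satisfying (i). Writing $\rho_\delta(x) = \rho(d(x,a)/\delta)$ and $\Psi(y,\delta) = \phi(y)\rho_\delta$, note that each $\rho_\delta$ is $\mathcal{M}$-measurable because $d(\cdot,a)$ is, thanks to the hypothesis $B(a,\lambda) \in \mathcal{M}$ for every $\lambda > 0$.

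Two auxiliary facts drive the argument. First, $\Psi \colon Y \times (0,1) \to L^p(X)$ is jointly continuous: the triangle inequality bounds $\|\Psi(y',\delta') - \Psi(y,\delta)\|_p$ by $\|\phi(y') - \phi(y)\|_p + \|\phi(y)(\rho_{\delta'} - \rho_\delta)\|_p$, and the second summand tends to $0$ when $\delta' \to \delta$ via the Dominated Convergence Theorem (Theorem~\ref{dom.conv.}) with dominating function $2^p|\phi(y)|^p$. Second, for each fixed $y$ one has $\|\phi(y) - \Psi(y,\delta)\|_p \to 0$ as $\delta \to 0^+$: the integrand $(1-\rho_\delta(x))^p|\phi(y)(x)|^p$ converges pointwise to $0$ off $\{a\}$, and $\mu(\{a\}) = 0$ because $a \in X_0$, so Theorem~\ref{dom.conv.} closes the argument.

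Now set
$$
r(y) = \sup\{\delta \in (0,1) : \|\phi(y) - \Psi(y,\delta)\|_p < \epsilon(y)\}.
$$
Pointwise monotonicity of $\rho_\delta$ in $\delta$ (nonincreasing) makes $\delta \mapsto \|\phi(y) - \Psi(y,\delta)\|_p$ nondecreasing, so the admissible set above is a left-segment of $(0,1)$; in particular every $\delta < r(y)$ already satisfies the strict inequality. The second auxiliary fact gives $r(y) > 0$, and $r$ is lower semicontinuous because joint continuity of $\Psi$ combined with continuity of $\epsilon$ makes $\{(y,\delta) : \|\phi(y)-\Psi(y,\delta)\|_p < \epsilon(y)\}$ open in $Y \times (0,1)$, so for any $\delta < r(y_0)$ one has $r(y) \geq \delta$ on an entire neighborhood of $y_0$.

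Finally, since $Y$ is paracompact and $r \colon Y \to (0,1]$ is positive and lower semicontinuous, a standard partition-of-unity construction yields a continuous $\delta \colon Y \to (0,1)$ with $\delta(y) < r(y)$ everywhere: for each $y_0$ choose $r_{y_0} \in (0, r(y_0))$ together with an open $U_{y_0} \ni y_0$ on which $r > r_{y_0}$, take a locally finite partition of unity $\{\eta_\alpha\}$ subordinate to $\{U_{y_\alpha}\}_\alpha$, and put $\delta(y) = \sum_\alpha \eta_\alpha(y) r_{y_\alpha}$; whenever $\eta_\alpha(y) > 0$ one has $r(y) > r_{y_\alpha}$, which forces $\delta(y) < r(y)$, and $r_{y_\alpha} < 1$ forces $\delta(y) < 1$. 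Setting $\psi(y) = \Psi(y,\delta(y))$ then produces the required maps. The main obstacle throughout is promoting the pointwise choice of a suitable cutoff radius to a continuously varying one, and this is exactly what the lower semicontinuity of $r$ together with the paracompactness of $Y$ resolves.
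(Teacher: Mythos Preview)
Your proof is correct and follows essentially the same route as the paper's: multiply $\phi(y)$ by a radial cutoff vanishing on $B(a,\delta)$, show that the supremal admissible radius is positive and lower semicontinuous, and then use paracompactness to select a continuous $\delta$ below it (the paper even uses the same linear cutoff $\rho(t)=t-1$ on $[1,2]$ and cites the same selection principle). Your packaging is actually cleaner, since proving joint continuity of $(y,\delta)\mapsto\phi(y)\rho_\delta$ up front makes the continuity of $\psi$ immediate, whereas the paper verifies it via a lengthy annulus-by-annulus estimate; just make explicit that $\rho$ is taken nondecreasing so that the admissible radii really form a left-segment of $(0,1)$.
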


\begin{proof}
For each $f \in L^p(X)$ and each $A \in \mathcal{M}$, define a function $f_A \in L^p(X)$ by
 $$f_A(x) = \left\{
 \begin{array}{ll}
 f(x) &\text{if } x \in A,\\
 0 &\text{if } x \in X \setminus A.
 \end{array}
 \right.$$
Remark that $\|f_A\|_p = (\int_A |f(x)|^p d\mu(x))^{1/p}$.
Given any $y\in Y$, put
 $$\xi(y) = \sup\{0 < \eta \leq 1 \mid \|\phi(y)_{B(a,\eta)}\|_p < \epsilon(y)\}.$$
According to Theorem~\ref{dom.conv.}, $\xi(y) > 0$ for all $y \in Y$ because $\mu(\{a\}) = 0$.
Then the function $\xi : Y \to (0,1]$ is lower semi-continuous.
Indeed, fix any $y \in Y$ and $\eta \in (0,\xi(y))$.
By the definition, there is $\lambda \in (0,\epsilon(y))$ such that $\|\phi(y)_{B(a,\xi(y)-\eta)}\|_p < \epsilon(y) - \lambda$.
Due to the continuity of $\phi$ and $\epsilon$, we can find a neighborhood $U$ of $y$ such that for every $y' \in U$, $\|\phi(y) - \phi(y')\|_p < \lambda/2$ and $|\epsilon(y) - \epsilon(y')| < \lambda/2$.
Then
\begin{align*}
 \|\phi(y')_{B(a,\xi(y)-\eta)}\|_p &\leq \|\phi(y')_{B(a,\xi(y)-\eta)} - \phi(y)_{B(a,\xi(y)-\eta)}\|_p + \|\phi(y)_{B(a,\xi(y)-\eta)}\|_p\\
 &\leq \|\phi(y') - \phi(y)\|_p + \|\phi(y)_{B(a,\xi(y)-\eta)}\|_p\\
 &< \epsilon(y) - \lambda/2 < \epsilon(y').
\end{align*}
Therefore $\xi(y') \geq \xi(y) - \eta$,
 which means that $\xi$ is a lower semi-continuous function.
Since $Y$ is paracompact,
 there is a map $\delta : Y \to (0,1)$ such that $\delta(y) < \xi(y)/2$ for each $y \in Y$ by virtue of Theorem~2.7.6 of \cite{Sakaik11}.
Then the desired map $\psi : Y \to L^p(X)$ can be defined as follows:
 $$\psi(y)(x) = \left\{
  \begin{array}{ll}
  0 &\text{if } x \in B(a,\delta(y)),\\
  ((d(a,x) - \delta(y))/\delta(y))\phi(y)(x) &\text{if } x \in B(a,2\delta(y)) \setminus B(a,\delta(y)),\\
  \phi(y)(x) &\text{if } x \in X \setminus B(a,2\delta(y)).
  \end{array}
 \right.$$
Condition (ii) follows from the definition immediately.
Let us note that for each $x \in B(a,2\delta(y)) \setminus B(a,\delta(y))$,
\begin{align*}
 |\phi(y)(x) - \psi(y)(x)| &= |\phi(y)(x) - ((d(a,x) - \delta(y))/\delta(y))\phi(y)(x)|\\
 &= ((2\delta(y) - d(a,x))/\delta(y))|\phi(y)(x)| \leq |\phi(y)(x)|.
\end{align*}
Since $2\delta(y) < \xi(y)$,
 we get
 $$\|\phi(y) - \psi(y)\|_p \leq \|\phi(y)_{B(a,2\delta(y))}\|_p \leq \|\phi(y)_{B(a,\xi(y))}\|_p \leq \epsilon(y),$$
 and hence condition (i) holds.

It remains to verify the continuity of $\psi$.
Take any $y \in Y$ and $\lambda > 0$.
Since $\phi$ and $\delta$ are continuous,
 we can choose a neighborhood $U$ of $y$ so that for each $y' \in U$,
\begin{enumerate}
 \renewcommand{\labelenumi}{(\alph{enumi})}
 \item $\|\phi(y) - \phi(y')\|_p < \lambda/8$,
 \item $|\delta(y) - \delta(y')| < \delta(y)$/2,
 \item $|\delta(y) - \delta(y')|\|\phi(y)\|_p < \lambda\delta(y)/8$,
 \item $|1/\delta(y) - 1/\delta(y')|\|\phi(y)\|_p < \lambda/8$.
\end{enumerate}
We shall prove that $\|\psi(y) - \psi(y')\|_p < \lambda$ only in the case that $\delta(y) \leq \delta(y')$ because it can be shown similarly in the other case.
Note that $\delta(y') < 2\delta(y)$ by condition (b).
Obviously, $\|\psi(y)_{B(a,\delta(y))} - \psi(y')_{B(a,\delta(y))}\|_p = 0$.
Due to condition (c), we have
\begin{align*}
 &\|\psi(y)_{B(a,\delta(y')) \setminus B(a,\delta(y))} - \psi(y')_{B(a,\delta(y')) \setminus B(a,\delta(y))}\|_p\\
 & \ \ \ \ \ \ \ \ = \bigg(\int_{B(a,\delta(y')) \setminus B(a,\delta(y))} |\psi(y)(x) - \psi(y')(x)|^p d\mu(x)\bigg)^{1/p}\\
 & \ \ \ \ \ \ \ \ = \bigg(\int_{B(a,\delta(y')) \setminus B(a,\delta(y))} |((d(a,x) - \delta(y))/\delta(y))\phi(y)(x)|^p d\mu(x)\bigg)^{1/p}\\
 & \ \ \ \ \ \ \ \ \leq ((\delta(y') - \delta(y))/\delta(y))\bigg(\int_{B(a,\delta(y')) \setminus B(a,\delta(y))} |\phi(y)(x)|^p d\mu(x)\bigg)^{1/p}\\
 & \ \ \ \ \ \ \ \ \leq ((\delta(y') - \delta(y))/\delta(y))\bigg(\int_X |\phi(y)(x)|^p d\mu(x)\bigg)^{1/p} = ((\delta(y') - \delta(y))\|\phi(y)\|_p/\delta(y) < \lambda/8.
\end{align*}
By conditions (a) and (d),
\begin{align*}
 &\|\psi(y)_{B(a,2\delta(y)) \setminus B(a,\delta(y'))} - \psi(y')_{B(a,2\delta(y)) \setminus B(a,\delta(y'))}\|_p\\
 & \ \ \ \ \ \ \ \ = \bigg(\int_{B(a,2\delta(y)) \setminus B(a,\delta(y'))} |\psi(y)(x) - \psi(y')(x)|^p d\mu(x)\bigg)^{1/p}\\
 & \ \ \ \ \ \ \ \ = \bigg(\int_{B(a,2\delta(y)) \setminus B(a,\delta(y'))} |((d(a,x) - \delta(y))/\delta(y))\phi(y)(x)\\
 & \ \ \ \ \ \ \ \ \ \ \ \ \ \ \ \ \ \ \ \ \ \ \ \ \ \ \ \ \ \ \ \ \ \ \ \ \ \ \ \ \ \ \ \ \ \ \ \ - ((d(a,x) - \delta(y'))/\delta(y'))\phi(y')(x)|^p d\mu(x)\bigg)^{1/p}\\
 & \ \ \ \ \ \ \ \ = \bigg(\int_{B(a,2\delta(y)) \setminus B(a,\delta(y'))} |((d(a,x) - \delta(y))/\delta(y) - (d(a,x) - \delta(y'))/\delta(y'))\phi(y)(x)\\
 & \ \ \ \ \ \ \ \ \ \ \ \ \ \ \ \ \ \ \ \ \ \ \ \ \ \ \ \ \ \ \ \ \ \ \ \ \ \ \ \ \ \ \ \ \ \ \ \ + ((d(a,x) - \delta(y'))/\delta(y'))(\phi(y)(x) - \phi(y')(x))|^p d\mu(x)\bigg)^{1/p}\\
 & \ \ \ \ \ \ \ \ \leq (1/\delta(y) - 1/\delta(y'))\bigg(\int_{B(a,2\delta(y)) \setminus B(a,\delta(y'))} |\phi(y)(x)|^p d\mu(x)\bigg)^{1/p}\\
 & \ \ \ \ \ \ \ \ \ \ \ \ \ \ \ \ \ \ \ \ \ \ \ \ \ \ \ \ \ \ \ \ \ \ \ \ \ \ \ \ \ \ \ \ \ \ \ \ + \bigg(\int_{B(a,2\delta(y)) \setminus B(a,\delta(y'))} |\phi(y)(x) - \phi(y')(x)|^p d\mu(x)\bigg)^{1/p}\\
 & \ \ \ \ \ \ \ \ \leq (1/\delta(y) - 1/\delta(y'))\bigg(\int_X |\phi(y)(x)|^p d\mu(x)\bigg)^{1/p} + \bigg(\int_X |\phi(y)(x) - \phi(y')(x)|^p d\mu(x)\bigg)^{1/p}\\
 & \ \ \ \ \ \ \ \ = (1/\delta(y) - 1/\delta(y'))\|\phi(y)\|_p + \|\phi(y) - \phi(y')\|_p < \lambda/4.
\end{align*}
Using conditions (a) and (c), we get
\begin{align*}
 &\|\psi(y)_{B(a,2\delta(y')) \setminus B(a,2\delta(y))} - \psi(y')_{B(a,2\delta(y')) \setminus B(a,2\delta(y))}\|_p\\
 & \ \ \ \ \ \ \ \ = \bigg(\int_{B(a,2\delta(y')) \setminus B(a,2\delta(y))} |\psi(y)(x) - \psi(y')(x)|^p d\mu(x)\bigg)^{1/p}\\
 & \ \ \ \ \ \ \ \ = \bigg(\int_{B(a,2\delta(y')) \setminus B(a,2\delta(y))} |\phi(y)(x) - ((d(a,x) - \delta(y'))/\delta(y'))\phi(y')(x)|^p d\mu(x)\bigg)^{1/p}\\
 & \ \ \ \ \ \ \ \ = \bigg(\int_{B(a,2\delta(y')) \setminus B(a,2\delta(y))} |(1 - (d(a,x) - \delta(y'))/\delta(y'))\phi(y)(x)\\
 & \ \ \ \ \ \ \ \ \ \ \ \ \ \ \ \ \ \ \ \ \ \ \ \ \ \ \ \ \ \ \ \ \ \ \ \ \ \ \ \ \ \ \ \ \ \ \ \ + ((d(a,x) - \delta(y'))/\delta(y'))(\phi(y)(x) - \phi(y')(x))|^p d\mu(x)\bigg)^{1/p}\\
 & \ \ \ \ \ \ \ \ \leq 2((\delta(y') - \delta(y))/\delta(y))\bigg(\int_{B(a,2\delta(y')) \setminus B(a,2\delta(y))} |\phi(y)(x)|^p d\mu(x)\bigg)^{1/p}\\
 & \ \ \ \ \ \ \ \ \ \ \ \ \ \ \ \ \ \ \ \ \ \ \ \ \ \ \ \ \ \ \ \ \ \ \ \ \ \ \ \ \ \ \ \ \ \ \ \ + \bigg(\int_{B(a,2\delta(y')) \setminus B(a,2\delta(y))} |\phi(y)(x) - \phi(y')(x)|^p d\mu(x)\bigg)^{1/p}\\
 & \ \ \ \ \ \ \ \ \leq 2((\delta(y') - \delta(y))/\delta(y))\bigg(\int_X |\phi(y)(x)|^p d\mu(x)\bigg)^{1/p} + \bigg(\int_X |\phi(y)(x) - \phi(y')(x)|^p d\mu(x)\bigg)^{1/p}\\
 & \ \ \ \ \ \ \ \ = 2((\delta(y') - \delta(y))/\delta(y))\|\phi(y)\|_p + \|\phi(y) - \phi(y')\|_p < 3\lambda/8.
\end{align*}
It follows from condition (a) that
\begin{align*}
 \|\psi(y)_{X \setminus B(a,2\delta(y'))} - \psi(y')_{X \setminus B(a,2\delta(y'))}\|_p &= \bigg(\int_{X \setminus B(a,2\delta(y'))} |\psi(y)(x) - \psi(y')(x)|^p d\mu(x)\bigg)^{1/p}\\
 &= \bigg(\int_{X \setminus B(a,2\delta(y'))} |\phi(y)(x) - \phi(y')(x)|^p d\mu(x)\bigg)^{1/p}\\
 &\leq \bigg(\int_X |\phi(y)(x) - \phi(y')(x)|^p d\mu(x)\bigg)^{1/p}\\
 &= \|\phi(y) - \phi(y')\|_p < \lambda/8.
\end{align*}
Therefore we have
\begin{align*}
 \|\psi(y) - \psi(y')\|_p &\leq \|\psi(y)_{B(a,\delta(y))} - \psi(y')_{B(a,\delta(y))}\|_p\\
 & \ \ \ \ \ \ \ \ + \|\psi(y)_{B(a,\delta(y')) \setminus B(a,\delta(y))} - \psi(y')_{B(a,\delta(y')) \setminus B(a,\delta(y))}\|_p\\
 & \ \ \ \ \ \ \ \ + \|\psi(y)_{B(a,2\delta(y)) \setminus B(a,\delta(y'))} - \psi(y')_{B(a,2\delta(y)) \setminus B(a,\delta(y'))}\|_p\\
 & \ \ \ \ \ \ \ \ + \|\psi(y)_{B(a,2\delta(y')) \setminus B(a,2\delta(y))} - \psi(y')_{B(a,2\delta(y')) \setminus B(a,2\delta(y))}\|_p\\
 & \ \ \ \ \ \ \ \ + \|\psi(y)_{X \setminus B(a,2\delta(y'))} - \psi(y')_{X \setminus B(a,2\delta(y'))}\|_p\\
 &< 7\lambda/8 < \lambda.
\end{align*}
Consequently, $\psi$ is continuous.
Thus the proof is completed.
\end{proof}

\begin{remark}\label{target}
In the above lemma, for each $y \in Y$, when $\phi(y)$ is corresponding to a function almost everywhere,
 that is uniformly continuous and bounded on $B(a,2\delta(y))$,
 we have $\psi(y) \in C_u(X)$.
\end{remark}

We show the following:

\begin{lem}\label{raise}
Let $a \in X_0$ such that for each $\lambda > 0$, $B(a,\lambda) \in \mathcal{M}$ and $\mu(B(a,\lambda)) > 0$,
 and for some $\lambda' > 0$, $\mu(B(a,\lambda')) < \infty$.
Suppose that $A \subset L^p(X)$ and $\xi : A \to (0,\infty)$ is a function such that for every $f \in A$, $f(x) = 0$ for almost every $x \in B(a,\xi(f))$,
 and that $B$ is a $Z$-set in $L^p(X)$.
If the union $A \cup B$ is a closed set in $L^p(X)$,
 then it is a $Z$-set.
\end{lem}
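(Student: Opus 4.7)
Since $A \cup B$ is closed by hypothesis, it suffices, for any open cover $\mathcal{U}$ of $L^p(X)$, to exhibit a map $F : L^p(X) \to L^p(X)$ that is $\mathcal{U}$-close to the identity and whose image avoids $A \cup B$. I would build $F$ in three stages: (1) push the identity off $B$ using its $Z$-set structure; (2) apply Lemma~\ref{dig} at $a$ to carve a small ball $B(a,\delta(g))$ on which the resulting function vanishes; (3) add a strictly positive bump supported on that ball to ensure the image lies outside $A$. The delicate point is that stages (2) and (3) must not return any image point to $B$, so their sizes need to be bounded by the distance to $B$ that stage (1) establishes.

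\textbf{Construction.} Fix a continuous $\epsilon : L^p(X) \to (0,1)$ such that every map $h$ with $\|h(g) - g\|_p \leq \epsilon(g)$ is $\mathcal{U}$-close to the identity. Using the $Z$-set property of $B$, choose $f_1 : L^p(X) \to L^p(X)$ with $\|f_1(g) - g\|_p \leq \epsilon(g)/2$ and $f_1(L^p(X)) \cap B = \emptyset$, and set
$$r(g) = \tfrac{1}{2}\min\{\epsilon(g)/2, \dist(f_1(g), B)\},$$
a positive continuous function. Apply Lemma~\ref{dig} to $\phi = f_1$ with size function $r/2$, and afterwards replace the resulting $\delta$ by $\min\{\delta, \lambda'/2\}$ (which preserves vanishing on the smaller ball); this produces $f_2 : L^p(X) \to L^p(X)$ with $\|f_2(g) - f_1(g)\|_p \leq r(g)/2$, vanishing on $B(a, \delta(g))$, where $\delta(g) \leq \lambda'/2$. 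Introduce the continuous family of bumps $\eta_g(x) = (\delta(g) - d(a,x))_+$, which lies in $L^p(X)$ since $B(a, \lambda')$ has finite measure, and scale by $c(g) = r(g)/(2\|\eta_g\|_p)$; here $\|\eta_g\|_p$ is positive and continuous in $g$ by dominated convergence applied inside $B(a,\lambda')$. Define $F(g) = f_2(g) + c(g)\eta_g$.

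\textbf{Verification and main obstacle.} The telescoped bound gives $\|F(g) - g\|_p \leq \epsilon(g)$, so $F$ is $\mathcal{U}$-close to the identity, and $\|F(g) - f_1(g)\|_p \leq r(g) \leq \dist(f_1(g), B)/2$ shows $F(L^p(X)) \cap B = \emptyset$. If some $F(g)$ lay in $A$, it would vanish almost everywhere on some ball $B(a,\eta)$; but on $B(a, \min\{\eta, \delta(g)\})$ -- a set of positive measure contained in both $B(a,\eta)$ and $B(a,\delta(g))$ -- the function $F(g)$ equals $c(g)(\delta(g) - d(a,\cdot)) > 0$, a contradiction. Hence $F(L^p(X)) \cap (A \cup B) = \emptyset$, so $A \cup B$ is a $Z$-set. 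The principal obstacle is the interdependence of the three stages: the ``dig-then-bump'' perturbation must be small relative to the pointwise distance $\dist(f_1(g), B)$ to avoid undoing stage (1), which forces the introduction of the safety function $r$, while simultaneously $g \mapsto c(g)\eta_g$ must be continuous into $L^p(X)$ -- a continuity that rests squarely on the hypothesis $\mu(B(a,\lambda')) < \infty$ so that dominated convergence applies uniformly on the supports of the bumps.
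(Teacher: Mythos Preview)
Your proof is correct and follows essentially the same three-stage strategy as the paper: push off $B$ using its $Z$-set structure, dig a hole at $a$ via Lemma~\ref{dig}, then add a small positive bump whose size is controlled by the distance to $B$. The only cosmetic difference is that the paper uses a simpler bump---a constant on a single fixed ball $B(a,\lambda)$ chosen once with $\mu(B(a,\lambda)) \leq 1$---so that continuity is immediate, whereas your cone-shaped bump with $g$-dependent support forces the dominated-convergence argument you describe.
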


\begin{proof}
Let $\epsilon : L^p(X) \to (0,1)$ be a map.
We shall construct a map $\phi : L^p(X) \to L^p(X)$ so that $\phi(L^p(X)) \cap (A \cup B) = \emptyset$ and $\|\phi(f) - f\|_p < \epsilon(f)$ for every $f \in L^p(X)$.
Since $B$ is a $Z$-set,
 there is a map $\psi_1 : L^p(X) \to L^p(X) \setminus B$ such that $\|\psi_1(f) - f\|_p < \epsilon(f)/3$ for each $f \in L^p(X)$.
Using Lemma~\ref{dig}, we can obtain maps $\psi_2 : L^p(X) \to L^p(X)$ and $\delta : L^p(X) \to (0,1)$ such that for each $f \in L^p(X)$,
\begin{enumerate}
 \renewcommand{\labelenumi}{(\roman{enumi})}
 \item $\|\psi_1(f) - \psi_2(f)\|_p \leq \min\{\epsilon(f),\dist(\{\psi_1(f)\},B)\}/3$,
 \item $\psi_2(f)(B(a,\delta(f))) = \{0\}$.
\end{enumerate}
Since $\mu(B(a,\lambda')) < \infty$ for some $\lambda' > 0$,
 $$\lim\limits_{k \to \infty} \mu(B(a,\lambda'/k)) = \mu\bigg(\bigcap_{k \in \N} B(a,\lambda'/k)\bigg) = \mu(\{a\}) = 0.$$
 So we can take $\lambda > 0$ so that $\mu(B(a,\lambda)) \leq 1$.
Letting $\psi_3 : L^p(X) \to L^p(X)$ be a map such that
 $$\psi_3(f)(x) = \left\{
 \begin{array}{ll}
 \min\{\epsilon(f),\dist(\{\psi_1(f)\},B)\}/3 &\text{if } x \in B(a,\lambda),\\
 0 &\text{if } x \in X \setminus B(a,\lambda),
 \end{array}
 \right.$$
 we can defined the desired map $\phi : L^p(X) \to L^p(X)$ by $\phi(f) = \psi_2(f) + \psi_3(f)$.
Since $\psi_2$ and $\psi_3$ are continuous,
 so is $\phi$.
It is easy to see that $\phi(f) \notin A$ for any $f \in L^p(X)$.
Observe that by condition (i),
\begin{align*}
 \|\phi(f) - \psi_1(f)\|_p &= \|\psi_2(f) + \psi_3(f) - \psi_1(f)\|_p \leq \|\psi_2(f) - \psi_1(f)\|_p + \|\psi_3(f)\|_p\\
 &\leq 2\min\{\epsilon(f),\dist(\{\psi_1(f)\},B)\}/3 < \dist(\{\psi_1(f)\},B),
\end{align*}
 which implies that $\phi(f) \notin B$.
Moreover,
\begin{align*}
 \|\phi(f) - f\|_p &\leq \|\phi(f) - \psi_1(f)\|_p + \|\psi_1(f) - f\|_p\\
 &< 2\min\{\epsilon(f),\dist(\{\psi_1(f)\},B)\}/3 + \epsilon(f)/3 \leq \epsilon(f).
\end{align*}
The proof is completed.
\end{proof}

Now we will prove that there exists a $Z_\sigma$-set in $L^p(X)$ which contains $C_u(X)$.
Set
 $$C_a(X) = \{f \in L^p(X) \mid f|_{X \setminus E} \text{ is continuous for some } E \subset X \text{ with } \mu(E) = 0\}.$$
It is obvious that $C_u(X) \subset C_a(X)$.
We show the following proposition.

\begin{prop}\label{Zsigma}
Let $X$ be separable.
Suppose that for all points $x \in X_0$, $B(x,\lambda) \in \mathcal{M}$ and $\mu(B(x,\lambda)) > 0$ for each $\lambda > 0$,
 and $B(x,\lambda'(x)) < \infty$ for some $\lambda'(x) > 0$,
 and that $X \setminus X_0$ is not dense in $X$.
Then $C_a(X)$ is contained in some $Z_\sigma$-set in $L^p(X)$,
 and hence so is $C_u(X)$.
\end{prop}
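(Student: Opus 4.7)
Proposal for the proof. The hypothesis that $X \setminus X_0$ fails to be dense in $X$ produces a non-empty open set $U \subset X_0$, and separability of $X$ yields a countable dense subset $\{a_i\}_{i \in \N}$ of $U$; each $a_i$ satisfies the standing hypotheses imposed on $a$ in Lemmas~\ref{dig} and~\ref{raise}. For each $i, n \in \N$ put
$$C_{i,n} = \{f \in L^p(X) \mid |f(x)| \leq n \text{ for almost every } x \in B(a_i, 1/n)\},$$
which is closed in $L^p(X)$ by Proposition~\ref{conv.} (pass to an a.e.\ convergent subsequence and preserve the pointwise bound). The plan is to prove (A) $C_a(X) \subset \bigcup_{i,n \in \N} C_{i,n}$, and (B) each $C_{i,n}$ is a $Z$-set in $L^p(X)$. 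Together these place $C_a(X)$, and hence $C_u(X) \subset C_a(X)$, inside the $Z_\sigma$-set $\bigcup_{i,n} C_{i,n}$.

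For (A), given $f \in C_a(X)$, fix a representative $\tilde f$ continuous on $X \setminus E$ with $\mu(E) = 0$. At any $a \in X \setminus E$ the continuity of $\tilde f$ as a map on the subspace $X \setminus E$ supplies an open $V_a \subset X$ with $a \in V_a$ on which $|f|$ is essentially bounded by $|\tilde f(a)| + 1$. Hence the open set
$$W = \{x \in X \mid |f| \text{ is essentially bounded on some open neighbourhood of } x\}$$
contains $X \setminus E$, so $\mu(X \setminus W) = 0$, and $W$ is dense in $X$ because every non-empty open set of $X$ has positive measure. Then $W \cap U$ is a non-empty open subset of $U$ and so contains some $a_i$; shrinking the associated neighbourhood yields $f \in C_{i,n}$ for all sufficiently large $n$.

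For (B), I adapt the scheme of Lemma~\ref{raise}. Fix $i, n$ and a map $\epsilon : L^p(X) \to (0,1)$. First apply Lemma~\ref{dig} to the identity map of $L^p(X)$ with $\epsilon/2$ in place of $\epsilon$, producing $\psi : L^p(X) \to L^p(X)$ and $\delta : L^p(X) \to (0,1)$ with $\|\psi(f) - f\|_p \leq \epsilon(f)/2$ and $\psi(f) = 0$ a.e.\ on $B(a_i, \delta(f))$. Using the vanishing of $r \mapsto \mu(B(a_i, r))$ at $0^+$ together with the lower semi-continuity and paracompactness argument parallel to the one already carried out in Lemma~\ref{dig}, construct a continuous map $\lambda : L^p(X) \to (0,1)$ with $2\lambda(f) \leq \min\{\delta(f), 1/n\}$ and $(n+1)\mu(B(a_i, 2\lambda(f)))^{1/p} < \epsilon(f)/2$. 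Let $\rho_f : X \to [0,1]$ be the piecewise linear bump equal to $1$ on $B(a_i, \lambda(f))$, decaying linearly in $d(a_i, \cdot)$ on the annulus, and $0$ off $B(a_i, 2\lambda(f))$, and put
$$\phi'(f) = \psi(f) + (n+1)\rho_f.$$
A calculation parallel to the continuity verification at the end of Lemma~\ref{raise} shows that $\phi'$ is continuous; moreover $\|\phi'(f) - f\|_p < \epsilon(f)$, and on the positive-measure set $B(a_i, \lambda(f)) \subset B(a_i, 1/n)$ one has $\phi'(f) \equiv n+1 > n$, so $\phi'(f) \notin C_{i,n}$. Hence $C_{i,n}$ is a $Z$-set.

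The main obstacle is step (B), specifically the continuous construction of the radius $\lambda$ and the continuity verification for $\phi'$; both follow the paracompactness and lower semi-continuity pattern already worked out for Lemmas~\ref{dig} and~\ref{raise}, the only new ingredient being that the constant added by the bump must now exceed $n$ rather than be small, which is absorbed into the smallness of $\mu(B(a_i, 2\lambda(f)))^{1/p}$.
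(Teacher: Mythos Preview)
Your argument is correct, but it takes a route essentially dual to the paper's. The paper covers $C_a(X)$ using ``bounded-below'' sets
\[
Z(n,U)=\{f\in L^p(X)\mid |f(x)|\geq 1/n\text{ for a.e.\ }x\in U\},
\]
where $\mathcal{U}$ is a countable basis of the open set $X\setminus\cl_X(X\setminus X_0)$. Each $Z(n,U)$ is a $Z$-set by a \emph{single} application of Lemma~\ref{dig}: digging a hole at some $a\in U$ already forces the image outside $Z(n,U)$. The leftover $Z=C_a(X)\setminus\bigcup Z(n,U)$ is then shown to have closure consisting of functions that vanish a.e.\ on a \emph{fixed} ball $B(a,\delta)$, so Lemma~\ref{raise} applies directly. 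By contrast, you use ``bounded-above'' sets $C_{i,n}$ and cover $C_a(X)$ outright, eliminating the leftover piece; the price is that your $Z$-set verification requires digging \emph{and} adding a bump of height $n+1$ supported on a variable-radius ball, together with the attendant continuity check. So the paper trades a slightly less direct covering for the ability to quote Lemmas~\ref{dig} and~\ref{raise} off the shelf, while your approach gives a cleaner decomposition at the cost of reworking the continuity estimate.

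Two minor points. First, your sentence ``$W$ is dense in $X$ because every non-empty open set of $X$ has positive measure'' overshoots the hypotheses, which only guarantee positive measure for balls centered in $X_0$; what you actually need (and what follows) is $W\cap U\neq\emptyset$, since $U\subset X_0$ contains such balls. Second, the continuity verification you cite is that of Lemma~\ref{dig}, not Lemma~\ref{raise}; the latter uses a bump on a fixed ball and is trivial, whereas your variable-radius bump really does need the $\delta$-perturbation estimates from Lemma~\ref{dig} (or, more simply, dominated convergence applied to $\lambda\mapsto\rho_\lambda$).
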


\begin{proof}
Notice that $X \setminus \cl_X{(X \setminus X_0)} \neq \emptyset$ and there is a countable open basis $\mathcal{U}$ of $X \setminus \cl_X{(X \setminus X_0)}$.
We may assume that $\emptyset \notin \mathcal{U}$.
For each $n \in \N$ and each $U \in \mathcal{U}$, let
 $$Z(n,U) = \{f \in L^p(X) \mid |f(x)| \geq 1/n \text{ for almost every } x \in U\}.$$
Then $Z(n,U)$ is closed in $L^p(X)$.
Indeed, for every sequence $\{f_k\} \subset Z(n,U)$ that converges to $f \in L^p(X)$, by Proposition~\ref{conv.}, replacing $\{f_k\}$ with a subsequence, we have that $f_k \to f$ almost everywhere.
For almost every $x \in U$, $|f_k(x)| \geq 1/n$ and $f_k(x) \to f(x)$,
 which implies that $|f(x)| \geq 1/n$.
Thus $Z(n,U)$ is closed.
Fix any $a \in U$.
According to Lemma~\ref{dig}, for each map $\epsilon : L^p(X) \to (0,1)$, we can choose maps $\phi : L^p(X) \to L^p(X)$ and $\delta : L^p(X) \to (0,1)$ satisfying the following:
\begin{enumerate}
 \renewcommand{\labelenumi}{(\roman{enumi})}
 \item $\|\phi(f) - f\|_p < \epsilon(f)$, and
 \item $\phi(f)(B(a,\delta(f))) = \{0\}$ for any $f \in L^p(X)$.
\end{enumerate}
Recall that $\mu(B(a,\delta(f))) > 0$.
As is easily observed,
 $\phi(L^p(X)) \cap Z(n,U) = \emptyset$.
Hence $Z(n,U)$ is a $Z$-set in $L^p(X)$.

Let $Z = C_a(X) \setminus \bigcup_{n \in \N} \bigcup_{U \in \mathcal{U}} Z(n,U)$.
We shall show that $\cl_{L^p(X)}{Z}$ is a $Z$-set in $L^p(X)$.
Take any $a \in X \setminus \cl_X{(X \setminus X_0)}$ and $\delta > 0$ such that $B(a,\delta) \subset X \setminus \cl_X{(X \setminus X_0)}$.
For each $f \in \cl_{L^p(X)}{Z}$, we prove that $f(x) = 0$ for almost every $x \in B(a,\delta)$.
There exists $\{f_k\} \subset Z$ such that $\|f_k - f\|_p \to 0$.
By Proposition~\ref{conv.}, replacing $\{f_k\}$ with a subsequence, we can choose $E_0 \subset X$ with $\mu(E_0) = 0$ so that $f_k(x) \to f(x)$ for any $x \in B(a,\delta) \setminus E_0$.
Since each $f_k \in C_a(X)$,
 there is $E_k \subset X$ such that $\mu(E_k) = 0$ and $f_k|_{X \setminus E_k}$ is continuous.
Put $E = \bigcup_{k \in \N \cup \{0\}} E_k$,
 so $\mu(E) = 0$.
Let any $x \in B(a,\delta) \setminus E$.
For all $n \in \N$ and $U \in \mathcal{U}$, there is a point $x_{(n,U)} \in U \setminus E$ such that $|f_k(x_{(n,U)})| < 1/n$ because $f_k \notin Z(n,U)$.
Due to the continuity of $f_k|_{X \setminus E}$, we have $|f_k(x)| \leq 1/n$,
 which means that $|f(x)| \leq 1/n$.
Therefore $f(x) = 0$ for almost every $x \in B(a,\delta)$.
Consequently, $Z$ is a $Z$-set in $L^p(X)$ by Lemma~\ref{raise},
 so $C_a(X)$ is contained in the $Z_\sigma$-set $\cl_{L^p(X)}{Z} \cup \bigcup_{n \in \N} \bigcup_{U \in \mathcal{U}} Z(n,U)$.
\end{proof}

\section{The strong $\mathfrak{M}_2$-universality of $C_u(X)$}

This section is devoted to proving that $C_u(X)$ is strongly $\mathfrak{M}_2$-universal.
Indeed, we will show the stronger result in Proposition~\ref{str.univ.pair}.
Given any pair of spaces $(M,Y)$, which means that $Y \subset M$,
 and any pair of classes $(\mathfrak{A},\mathfrak{C})$, we write $(M,Y) \in (\mathfrak{A},\mathfrak{C})$ if $M \in \mathfrak{A}$ and $Y \in \mathfrak{C}$.
A pair $(M,Y)$ is called to be \textit{strongly $(\mathfrak{A},\mathfrak{C})$-universal} if the following condition holds.
\begin{itemize}
 \item Let $(A,D) \in (\mathfrak{A},\mathfrak{C})$ and $B$ be a closed subset of $A$.
 Suppose that $f : A \to M$ is a map such that $f|_B$ is a $Z$-embedding and $(f|_B)^{-1}(Y) = B \cap D$.
 Then for each open cover $\mathcal{U}$ of $M$, there exists a $Z$-embedding $g : A \to M$ such that $g$ is $\mathcal{U}$-close to $f$, $g|_B = f|_B$ and $g^{-1}(Y) = D$.
\end{itemize}
Denote the class of compact metrizable spaces by $\mathfrak{M}_0$.
Set
 $$\cs_1 = \Big\{(x(n))_{n \in \N} \in \s \ \Big| \ \lim_{n \to \infty} x(n) = 1\Big\}.$$
It is well known that the both pairs $(\s,\cs_0)$ and $(\s,\cs_1)$ are strongly $(\mathfrak{M}_0,\mathfrak{M}_2)$-universal,
 refer to \cite{Mil3}.
A strong universality of a pair implies one of a space.
By virtue of Theorems~1.7.9 and 1.3.2 of \cite{BRZ}, we can establish the following:

\begin{prop}\label{str.univ.}
Let $(\mathfrak{A},\mathfrak{C})$ be a pair of classes of metrizable spaces.
Suppose that $M$ is a space homeomorphic to $\s$ and $Y$ is a homotopy dense subspace in $M$.
If $(M,Y)$ is strongly $(\mathfrak{A},\mathfrak{C})$-universal,
 then $Y$ is strongly $\mathfrak{C}$-universal.
\end{prop}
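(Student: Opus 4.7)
The plan is to derive the strong $\mathfrak{C}$-universality of $Y$ from the strong $(\mathfrak{A},\mathfrak{C})$-universality of the pair $(M,Y)$, using the homotopy density of $Y$ in $M$ to transfer $Z$-set information between the two spaces. The preparatory fact, which is the essence of Theorem~1.3.2 of \cite{BRZ}, is a transfer principle for $Z$-sets: since $M \cong \s$ is an AR and $Y$ is homotopy dense in $M$ via a homotopy $h : M \times [0,1] \to M$ with $h_0 = \id_M$ and $h_t(M) \subset Y$ for $t > 0$, a closed subset $C \subset Y$ whose closure in $M$ is a $Z$-set of $M$ is itself a $Z$-set of $Y$. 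The mechanism is the composition trick: a near-identity self-map of $M$ that misses the ambient $Z$-set, post-composed with $h_t$ for small $t > 0$, becomes a near-identity self-map of $Y$ that misses $C$.

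Given the data for strong $\mathfrak{C}$-universality---$A \in \mathfrak{C}$, a closed set $B \subset A$, a map $f : A \to Y$ with $f|_B$ a $Z$-embedding in $Y$, and an open cover $\mathcal{V}$ of $Y$---first lift $\mathcal{V}$ to an open cover $\mathcal{U}$ of $M$ each of whose members meets $Y$ in a subset of some element of $\mathcal{V}$. Next, embed $A$ as a subspace of a suitable $\tilde{A} \in \mathfrak{A}$ (a Hilbert cube in the target application with $\mathfrak{A} = \mathfrak{M}_0$) and extend the composition $A \xrightarrow{f} Y \hookrightarrow M$ to a map $\tilde{f} : \tilde{A} \to M$, which is possible because $M$ is an AR. Apply the strong $(\mathfrak{A},\mathfrak{C})$-universality of $(M,Y)$ to $\tilde{f}$ with distinguished subset $D = A$; the hypothesis produces a $Z$-embedding $\tilde{g} : \tilde{A} \to M$ that is $\mathcal{U}$-close to $\tilde{f}$, agrees with $\tilde{f}$ on $B$, and satisfies $\tilde{g}^{-1}(Y) = A$. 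Set $g = \tilde{g}|_A$ as the candidate map $A \to Y$.

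The main obstacle is verifying that $g$ is indeed a $Z$-embedding into $Y$, and, upstream of that, arranging the extension $\tilde{f}$ so that the pair-universality hypothesis can actually be invoked on it (this is where the bookkeeping of Theorem~1.7.9 of \cite{BRZ} is indispensable, in particular for arranging $\tilde{f}|_B$ as a $Z$-embedding in $M$ with the correct preimage of $Y$). Once $\tilde{g}$ is in hand, the embedding property, the $\mathcal{V}$-closeness to $f$, and the agreement with $f$ on $B$ descend to $g$ immediately. For the $Z$-set condition, note that $g(A) = \tilde{g}(\tilde{A}) \cap Y$ (using $\tilde{g}^{-1}(Y) = A$), so $g(A)$ is closed in $Y$; its closure in $M$ sits inside the $Z$-set $\tilde{g}(\tilde{A})$ and is therefore itself a $Z$-set of $M$, whereupon the transfer principle upgrades $g(A)$ to a $Z$-set of $Y$. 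This completes the proof modulo the technicalities packaged by Theorems~1.3.2 and~1.7.9 of \cite{BRZ}.
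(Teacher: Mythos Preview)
The paper does not supply a proof of this proposition beyond the sentence ``By virtue of Theorems~1.7.9 and 1.3.2 of \cite{BRZ}, we can establish the following,'' so there is nothing to compare against at the level of argument; your sketch is a correct and considerably more detailed unpacking of precisely those two citations, with the same division of labor (1.3.2 for the $Z$-set transfer between $M$ and its homotopy-dense subspace, 1.7.9 for the bookkeeping that turns pair-universality into universality of the second coordinate). One small point worth flagging: your embedding step $A \hookrightarrow \tilde{A} \in \mathfrak{A}$ tacitly assumes every space in $\mathfrak{C}$ sits inside some space in $\mathfrak{A}$, a hypothesis the proposition as stated does not make explicit; you rightly observe this holds in the only case the paper needs, $(\mathfrak{A},\mathfrak{C}) = (\mathfrak{M}_0,\mathfrak{M}_2)$, and indeed the general statement in \cite{BRZ} carries such a compatibility assumption on the pair of classes.
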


The next lemma will be used for proving Proposition~\ref{str.univ.pair}.

\begin{lem}\label{div.}
Let $Y$ be a space and $g : Y \to \s$ be an injective map.
Suppose that for every $E \in \mathcal{M}$ with $\mu(E) = 0$, $X \setminus E$ is dense in $X$,
 and that $x_m, x_\infty \in X$, $m \in \N$, are points such that $d(x_1,x_\infty) < 1$, $\{d(x_m,x_\infty)\}$ is a strictly decreasing sequence converging to $0$ and $B(x_\infty,d(x_1,x_\infty)) \in \mathcal{M}$ with $\mu(B(x_\infty,d(x_1,x_\infty))) \leq 1$.
Then for each map $\delta : Y \to (0,1)$, there exists an injective map $\Phi : Y \to L^p(X)$ which satisfies the following conditions for every $y \in Y$.
\begin{enumerate}
 \item $\|\Phi(y)\|_p \leq \delta(y)$.
 \item $\Phi(y)(X \setminus B(x_\infty,d(x_{2k},x_\infty))) = \{0\}$ if $2^{-k} \leq \delta(y) \leq 2^{-k+1}$, $k \in \N$.
 \item $\Phi(y)(x_m) = \delta(y)$ for all $m \in \{2j + 1, \infty \mid j > k\}$ if $2^{-k} \leq \delta(y) \leq 2^{-k+1}$, $k \in \N$.
 \item $\Phi(y)$ is continuous on $X \setminus \{x_\infty\}$.
 \item $y \in g^{-1}(\cs_1)$ if and only if $\Phi(y) \in C(X)$.
\end{enumerate}
\end{lem}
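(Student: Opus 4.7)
Plan:

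Let $s_m := d(x_m, x_\infty)$. I would build $\Phi(y)$ as a plateau of height $\delta(y)$ near $x_\infty$, perturbed by small tent bumps at the points $x_{2m}$ whose heights encode coordinates of $g(y)$. Fix radii $r_m > 0$, $m \in \N$, so small that the balls $B(x_m, r_m)$ are pairwise disjoint, lie inside $B(x_\infty, s_{m-1}) \setminus \{x_\infty\}$, contain no $x_{m'}$ for $m' \neq m$, and satisfy $\mu(B(x_m, r_m)) \leq 2^{-m}$ --- this is possible because $\sum_m \mu(\{x_m\}) \leq \mu(B(x_\infty, s_1)) \leq 1$ forces $\mu(\{x_m\}) \to 0$ and $\mu(B(x_m, r)) \downarrow \mu(\{x_m\})$ as $r \downarrow 0$. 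Set $t_m(x) := \max\{0, 1 - d(x,x_m)/r_m\}$ and, for each $k \in \N$,
\[
u_k(x) := \min\Big\{1,\, \max\Big\{0,\, \frac{s_{2k} - d(x, x_\infty)}{s_{2k} - s_{2k+1}}\Big\}\Big\},
\]
the plateau equal to $1$ on $\overline{B(x_\infty, s_{2k+1})}$ and vanishing outside $B(x_\infty, s_{2k})$. Define the model
\[
\Phi_k(y)(x) := \delta(y)\,u_k(x) + \delta(y) \sum_{j=1}^{\infty}\bigl(g(y)(j) - 1\bigr)\,t_{2(k+j)}(x).
\]

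Each $\Phi_k(y)$ realizes a fixed-$k$ version of conditions (1)--(5): disjointness of the tent supports inside $\{u_k=1\}$ gives $|\Phi_k(y)(x)|\leq\delta(y)$ and $\operatorname{supp}\Phi_k(y)\subset B(x_\infty,s_{2k})$; by construction $\Phi_k(y)(x_\infty) = \delta(y)$, $\Phi_k(y)(x_{2j+1}) = \delta(y)$ for every $j\geq k$, and $\Phi_k(y)(x_{2(k+j)}) = g(y)(j)\,\delta(y)$ for $j\geq 1$; $\Phi_k$ is continuous on $X\setminus\{x_\infty\}$ by local finiteness of the tent sum; and the ``iff $g(y)\in\cs_1$'' dichotomy at $x_\infty$ is built in along the tent centers. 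The $L^p$-continuity of $y\mapsto\Phi_k(y)$ follows by dominated convergence from the summable majorant $\sum_m \mu(B(x_m,r_m))^{1/p} < \infty$ together with the coordinatewise continuity of $g:Y\to\s$.

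To promote this to a globally continuous $\Phi$ while preserving the support condition, I would blend the $\Phi_k$ using a continuous tent partition of unity $\{\rho_k\}_{k\in\N}$ on $(0,1)$ with $\rho_k(2^{-k+1})=1$, $\operatorname{supp}\rho_k = [2^{-k},\,2^{-k+2}]$, and $\sum_k\rho_k\equiv 1$, and set $\Phi(y) := \sum_{k=1}^{\infty}\rho_k(\delta(y))\,\Phi_k(y)$. At any $\delta(y)$ only two consecutive $\rho_k$'s are nonzero; a short check shows that the union of the supports of the active $\Phi_k$'s lies inside $B(x_\infty,s_{2k'})$ for every admissible $k'$ with $2^{-k'}\leq\delta(y)\leq 2^{-k'+1}$, so (2) is preserved, and (1), (3), (4), (5) pass through the convex combination.

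The delicate point is injectivity, because the blend interleaves two shifted encodings of $g(y)$:
\[
\Phi(y)(x_{2m}) = \rho_k(\delta(y))\,g(y)(m-k)\,\delta(y) + \rho_{k+1}(\delta(y))\,g(y)(m-k-1)\,\delta(y), \qquad m \geq k+1,
\]
with the convention $g(y)(0) = 0$. From the unique continuous-on-$X\setminus\{x_\infty\}$ representative of $\Phi(y)$ (which exists because $X\setminus E$ is dense for every $\mu$-null $E$), I would first read $\delta(y) = \lim_{j\to\infty}\Phi(y)(x_{2j+1})$, determine $k$ and the active weights from $\delta(y)$, and then invert the lower-triangular linear system one step at a time: the value $\Phi(y)(x_{2(k+1)}) = \rho_k\,g(y)(1)\,\delta(y)$ yields $g(y)(1)$, and inductively $g(y)(n) = \bigl[\Phi(y)(x_{2(k+n)}) - \rho_{k+1}\,g(y)(n-1)\,\delta(y)\bigr]/\bigl(\rho_k\,\delta(y)\bigr)$ extracts every coordinate; at the boundary values of $\delta$ where $\rho_k = 0$ the system degenerates to a single active index and recovery is immediate. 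Since $g$ is injective, $\Phi(y)$ thereby determines $y$, completing the plan.
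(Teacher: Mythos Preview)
Your construction is correct and is a genuine alternative to the paper's. Both arguments share the same skeleton: encode the coordinates of $g(y)$ into the values of $\Phi(y)$ at the even-indexed points $x_{2m}$, keep the odd-indexed values equal to $\delta(y)$ so that $\delta(y)$ can be read back, and interpolate between the ``encoding starting at level $k$'' and the ``encoding starting at level $k+1$'' as $\delta(y)$ crosses the dyadic threshold $2^{-k}$. The paper implements this radially: it slices $X$ into the annular shells $S_m=\{x: r_m\le d(x,x_\infty)\le r_{m-1}\}$, prescribes the boundary values $f_i^k(y)$ (with the convex weights $1-\phi_k(y),\phi_k(y)$ playing exactly the role of your $\rho_k,\rho_{k+1}$), and interpolates linearly in $d(x,x_\infty)$; it then checks directly that $\Phi_k=\Phi_{k+1}$ on $Y_k\cap Y_{k+1}$, so the pieces glue without an external partition of unity. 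Your version instead superimposes tent bumps $t_{2(k+j)}$ on a plateau $u_k$ and blends with an explicit $\{\rho_k\}$ subordinate to the dyadic cover of $(0,1)$. The paper's route has the mild advantage that everything is a function of $d(\,\cdot\,,x_\infty)$ alone, so no auxiliary radii $r_m$ or balls $B(x_m,r_m)$ need to be chosen; your route is a bit more modular and the injectivity recursion is perhaps more transparent.

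One small slip: your justification for $\mu(B(x_m,r_m))\le 2^{-m}$ does not follow from $\mu(\{x_m\})\to 0$ alone (individual $\mu(\{x_m\})$ could still exceed $2^{-m}$). Fortunately you never actually need that specific bound: once the balls $B(x_m,r_m)$ are chosen pairwise disjoint inside $B(x_\infty,s_1)$, disjointness already gives $\sum_m \mu(B(x_m,r_m))\le \mu(B(x_\infty,s_1))\le 1$, which is precisely the summability used for the dominated-convergence/continuity argument and for controlling tails. With that correction your plan goes through.
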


\begin{proof}
For each $k \in \N$, setting
 $$Y_k = \{y \in Y \mid 2^{-k} \leq \delta(y) \leq 2^{-k+1}\},$$
 we have that $Y = \bigcup_{k \in \N} Y_k$.
Define a map $f_i^k : Y_k \to [0,1]$ as follows:
 $$f_i^k(y) = \left\{
 \begin{array}{ll}
  0 &\text{if } i = 1,\\
  \delta(y)(1-\phi_k(y)) &\text{if } i = 2,\\
  \delta(y)(1-\phi_k(y))g(y)(1) & \text{if } i = 3,\\
  \delta(y) &\text{if } i = 2j, j \geq 2,\\
  \delta(y)((1-\phi_k(y))g(y)((i-1)/2) + \phi_k(y)g(y)((i-3)/2)) &\text{if } i = 2j+1, j \geq 2,
 \end{array}
 \right.$$
 where $\phi_k(y) = 2 - 2^k\delta(y)$.
For each $m \in \N$, let
 $$S_m = \{x \in X \mid r_m \leq d(x,x_\infty) \leq r_{m-1}\},$$
 where $r_0 = 1$ and $r_m = d(x_m,x_\infty)$,
 and let $\psi_m : S_m \to [0,1]$ be a map defined by
 $$\psi_m(x) = (d(x,x_\infty) - r_m)/(r_{m-1} - r_m).$$
We define a map $\Phi_k : Y_k \to L^p(X)$, $k \in \N$, as follows:
 $$\Phi_k(y)(x) = \left\{
 \begin{array}{ll}
  \delta(y) &\text{if } x = x_\infty,\\
  \psi_{2k+i}(x)f_i^k(y) + (1-\psi_{2k+i}(x))f_{i+1}^k(y) &\text{if } x \in S_{2k+i},\\
  0 &\text{if } d(x,x_\infty) \geq r_{2k}.
 \end{array}
 \right.$$
Verify that $\Phi_k(y) = \Phi_{k+1}(y)$ for all $y \in Y_k \cap Y_{k+1}$.
Indeed, by the definition, $\Phi_k(y)(x_\infty) = \delta(y) = \Phi_{k+1}(y)(x_\infty)$,
 and $\Phi_k(y)(x) = 0 = \Phi_{k + 1}(y)(x)$ for every $x \in X$ with $d(x,x_\infty) \geq r_{2k}$.
We get $\phi_k(y) = 1$ and $\phi_{k+1}(y) = 0$ because $\delta(y) = 2^{-k}$,
 and hence $f_1^k(y) = f_2^k(y) = f_3^k(y) = 0$.
Therefore for each $x \in S_{2k + 1}$,
 $$\Phi_k(y)(x) = \psi_{2k + 1}(x)f_1^k(y)+(1-\psi_{2k+1}(x))f_2^k(y) = 0 = \Phi_{k + 1}(y)(x),$$
 and for each $x \in S_{2k + 2}$,
 $$\Phi_k(y)(x) = \psi_{2k+2}(x)f_2^k(y)+(1-\psi_{2k+2}(x))f_3^k(y) = 0 = \Phi_{k + 1}(y)(x).$$
Moreover, $f_3^k(y) = 0 = f_1^{k + 1}(y)$, $f_{2j+3}^k(y) = \delta(y)g(y)(j) = f_{2j + 1}^{k + 1}(y)$ and $f_{2j + 2}^k(y) = \delta(y) = f_{2j}^{k + 1}(y)$ for any $j \geq 1$,
 that is, $f_{i + 2}^k(y) = f_i^{k + 1}(y)$ for any $i \geq 1$.
It follows that for each $x \in S_{2k + i + 2}$, $i \geq 1$,
\begin{align*}
 \Phi_k(y)(x) &= \psi_{2k + i + 2}(x)f_{i + 2}^k(y) + (1 - \psi_{2k + i + 2}(x))f_{i + 3}^k(y)\\
 &= \psi_{2(k + 1) + i}(x)f_i^{k + 1}(y) + (1 - \psi_{2(k + 1) + i}(x))f_{i + 1}^{k + 1}(y) = \Phi_{k + 1}(y)(x).
\end{align*}
As a consequence, $\Phi_k(y) = \Phi_{k+1}(y)$.

Now define the desired map $\Phi : Y \to L^p(X)$ by $\Phi(y) = \Phi_k(y)$ if $y \in Y_k$.
Evidently, conditions (1), (2), (3) and (4) follows from the definition of $\Phi$.
We will check condition (5).
Firstly, let us show the only if part.
Take any $y \in g^{-1}(\cs_1)$,
 where $y \in Y_k$ for some $k \in \N$,
 and let $\epsilon \in (0,\delta(y))$.
Since $g(y) \in \cs_1$,
 there exists $i_0 \in \N$ such that if $i \geq i_0$,
 then $g(y)(i) > 1 - \epsilon/\delta(y)$.
Let any $i \geq 2i_0 + 3$ and any point $x \in S_{2k + i}$.
In the case that $i$ is even,
 $f_i^k(y) = \delta(y)$.
In the case that $i$ is odd,
\begin{align*}
 f_i^k(y) &= \delta(y)((1-\phi_k(y))g(y)((i-1)/2) + \phi_k(y)g(y)((i-3)/2))\\
 &> \delta(y)((1-\phi_k(y))(1-\epsilon/\delta(y))+\phi_k(y)(1-\epsilon/\delta(y))) = \delta(y) - \epsilon.
\end{align*}
Therefore we get that
\begin{align*}
 \psi_{2k+i}(x)f_i^k(y) + (1-\psi_{2k+i}(x))f_{i+1}^k(y) &> \psi_{2k+i}(x)(\delta(y)-\epsilon) + (1-\psi_{2k+i}(x))(\delta(y)-\epsilon)\\
 &= \delta(y) - \epsilon.
\end{align*}
It follows that
\begin{align*}
 |\Phi(y)(x_\infty) - \Phi(y)(x)| &= |\delta(y) - (\psi_{2k+i}(x)f_i^k(y) + (1-\psi_{2k+i}(x))f_{i+1}^k(y))|\\
 &< \delta(y) - (\delta(y)-\epsilon) = \epsilon,
\end{align*}
 which means that the function $\Phi(y)$ is continuous at $x_\infty$.
Moreover, $\Phi(y)$ is continuous on $X \setminus \{x_\infty\}$ due to (4),
 so $\Phi(y) \in C(X)$.

Next, to prove the if part, fix any $y \in Y$ such that $\Phi(y) \in C(X)$.
Then $y \in Y_k$ and $\phi_k(y) > 0$ for some $k \in \N$.
For each $\epsilon \in (0,1)$, let $\epsilon' = \epsilon\phi_k(y)\delta(y)$.
Since $\Phi(y)$ is coincident with a function continuous at the point $x_\infty$,
 we can find a subset $E \subset X$ with $\mu(E) = 0$ and an even number $i_0 \geq 4$ such that for every $z, z' \in B(x_\infty,r_{2k + i_0 - 2}) \setminus E$, $|\Phi(y)(z) - \Phi(y)(z')| < \epsilon'/3$.
By the combination of condition (4) with the density of $X \setminus E \subset X$, for every $i \geq i_0$, there is a point $z_i \in B(x_\infty,r_{2k + i_0 - 2}) \setminus E$,
 which is sufficiently close to $x_{2k + i - 1}$, such that $|\Phi(y)(x_{2k + i - 1}) - \Phi(y)(z_i)| < \epsilon'/3$.
Hence for any odd number $i \geq i_0$,
\begin{align*}
 |f_i^k(y) - \delta(y)| &= |f_i^k(y) - f_{i_0}^k(y)|\\
 &= |(\psi_{2k + i}(x_{2k + i - 1})f_i^k(y) + (1 - \psi_{2k + i}(x_{2k + i - 1}))f_{i + 1}^k(y))\\
 &\ \ \ \ \ \ \ \ \ \ \ \ \ \ \ \ - (\psi_{2k + i_0}(x_{2k + i_0 - 1})f_{i_0}^k(y) + (1 - \psi_{2k + i_0}(x_{2k + i_0 - 1}))f_{i_0 + 1}^k(y))|\\
 &= |\Phi(y)(x_{2k + i - 1}) - \Phi(y)(x_{2k + i_0 - 1})|\\
 &\leq |\Phi(y)(x_{2k + i - 1}) - \Phi(y)(z_i)| + |\Phi(y)(z_i) - \Phi(y)(z_{i_0})|\\
 &\ \ \ \ \ \ \ \ \ \ \ \ \ \ \ \ \ \ \ \ \ \ \ \ \ \ \ \ \ \ \ \ \ \ \ \ \ \ \ \ + |\Phi(y)(z_{i_0}) - \Phi(y)(x_{2k + i_0 - 1})|\\
 &< \epsilon'.
\end{align*}
It follows that for each $j \geq (i_0-2)/2$,
\begin{align*}
 g(y)(j) &= (f_{2j+3}^k(y)/\delta(y) - (1 - \phi_k(y))g(y)(j+1))/\phi_k(y)\\
 &> (f_{2j+3}^k(y)/\delta(y) - (1 - \phi_k(y)))/\phi_k(y)\\
 &> ((\delta(y) - \epsilon')/\delta(y) - (1-\phi_k(y)))/\phi_k(y)\\
 &= ((\delta(y) - \epsilon\phi_k(y)\delta(y))/\delta(y) - (1 - \phi_k(y)))/\phi_k(y) = 1 - \epsilon,
\end{align*}
which implies that $g(y) \in \cs_1$.

Finally, we shall verify that $\Phi$ is injective.
Let any $y_1, y_2 \in Y$ with $\Phi(y_1) = \Phi(y_2)$.
Remark that there is $E \subset X$ with $\mu(E) = 0$ such that for each point $x \in X \setminus E$, $\Phi(y_1)(x) = \Phi(y_2)(x)$.
By condition (4) and the density of $X \setminus (\{x_\infty\} \cup E)$ in $X \setminus \{x_\infty\}$, we can see that $\Phi(y_1)|_{X \setminus \{x_\infty\}} = \Phi(y_2)|_{X \setminus \{x_\infty\}}$.
For some $k_i \in \N$, $i = 1, 2$, the point $y_i \in Y_{k_i}$.
Letting $k = \max\{k_i \mid i = 1, 2\}$, we have
\begin{align*}
 \Phi(y_i)(x_{2k + 3}) &= \Phi_{k_i}(y_i)(x_{2k + 3}) = \psi_{2k + 3}(x_{2k + 3})f_{2(k - k_i + 1) + 1}^{k_i}(y_i) + (1 - \psi_{2k + 3}(x_{2k + 3}))f_{2(k - k_i + 2)}^{k_i}(y_i)\\
 &= f_{2(k - k_i + 2)}^{k_i}(y_i) = \delta(y_i),
\end{align*}
 so $\delta(y_1) = \delta(y_2)$.
Thus the both points $y_1$ and $y_2$ are contained in $Y_k$ and
 $$\phi_k(y_1) = 2 - 2^k\delta(y_1) = 2 - 2^k\delta(y_2) = \phi_k(y_2).$$
Furthermore, for every $i \in \N$, we get
 $$f_{i + 1}^k(y_1) = \Phi_k(y_1)(x_{2k + i}) = \Phi(y_1)(x_{2k + i}) = \Phi(y_2)(x_{2k + i}) = \Phi_k(y_2)(x_{2k + i}) = f_{i + 1}^k(y_2),$$
 which means that $f_{j}^k(y_1) = f_{j}^k(y_2)$ for each $j \geq 2$.
When $\phi_k(y_1) = 1$, for all $j \in \N$,
 $$g(y_1)(j) = f_{2j+3}^k(y_1)/\delta(y_1) = f_{2j+3}^k(y_2)/\delta(y_2) = g(y_2)(j).$$
When $\phi_k(y_1) \neq 1$, we see that
 $$g(y_1)(1) = f_3^k(y_1)/((1-\phi_k(y_1))\delta(y_1)) = f_3^k(y_2)/((1-\phi_k(y_2))\delta(y_2)) = g(y_2)(1).$$
Supposing that $g(y_1)(j) = g(y_2)(j)$ for some $j \in \N$, we can obtain
\begin{align*}
 g(y_1)(j + 1) &= (f_{2j + 3}^k(y_1)/\delta(y_1) - \phi_k(y_1)g(y_1)(j))/(1 - \phi_k(y_1))\\
 &= (f_{2j + 3}^k(y_2)/\delta(y_2) - \phi_k(y_2)g(y_2)(j))/(1 - \phi_k(y_2)) = g(y_2)(j + 1).
\end{align*}
By induction, it follows that $g(y_1)(j) = g(y_2)(j)$ for any $j \in \N$,
 that is, $g(y_1) = g(y_2)$.
By virtue of the injectivity of $g$,
 we have $y_1 = y_2$,
 so $\Phi$ is an injection.
The proof is completed.
\end{proof}

\begin{remark}\label{cpt.supp.}
In the above lemma, if there is a compact set $K \subset X$ that contains $B(x_\infty,d(x_1,x_\infty))$,
 the function $\Phi(y)$ has a compact support for each $y \in Y$.
Hence when $\Phi(y)$ is continuous,
 it is uniformly continuous,
 so condition (5) can be rewritten as follows:
\begin{enumerate}
\setcounter{enumi}{4}
 \item $y \in g^{-1}(\cs_1)$ if and only if $\Phi(y) \in C_u(X)$.
\end{enumerate}
\end{remark}

Now we show the following:

\begin{prop}\label{str.univ.pair}
Suppose that for every $E \in \mathcal{M}$ with $\mu(E) = 0$, the complement $X \setminus E$ is dense in $X$,
 and that there are distinct points $x_0, x_\infty \in X_0$ such that for any $\lambda > 0$, $B(x_\infty,\lambda) \in \mathcal{M}$,
 and $x_\infty$ has a compact neighborhood $K \subset X$ with $\mu(K) < \infty$.
If $C_u(X)$ is homotopy dense in $L^p(X)$,
 then the pair $(L^p(X),C_u(X))$ is strongly $(\mathfrak{M}_0,\mathfrak{M}_2)$-universal,
 and hence $C_u(X)$ is strongly $\mathfrak{M}_2$-universal.
\end{prop}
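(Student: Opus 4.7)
Fix $(A, D) \in (\mathfrak{M}_0, \mathfrak{M}_2)$, a closed $B \subset A$, a map $f : A \to L^p(X)$ with $f|_B$ a $Z$-embedding satisfying $(f|_B)^{-1}(C_u(X)) = B \cap D$, and an open cover $\mathcal{U}$ of $L^p(X)$. I construct the required $Z$-embedding as a sum $g = f' + \Phi$ in the Banach space $L^p(X)$: $f'$ will be a $Z$-embedding close to $f$ that already lies in $C_u(X)$ on $A \setminus B$, and $\Phi$ will be a much smaller correction, supported in the compact neighborhood $K$ of $x_\infty$, encoding membership in $D$.

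Since $L^p(X) \cong \s$ by Theorem~\ref{Lp} and $(\s, \cs_1)$ is strongly $(\mathfrak{M}_0, \mathfrak{M}_2)$-universal, I fix an embedding $\sigma : A \to \s$ with $\sigma^{-1}(\cs_1) = D$. Choose points $x_m$ in $X$ with $d(x_m, x_\infty) \downarrow 0$ and $B(x_\infty, d(x_1, x_\infty)) \subset K$, after rescaling so $\mu(B(x_\infty, d(x_1, x_\infty))) \leq 1$, and pick a continuous $\delta : A \to [0, 1)$ with $\delta^{-1}(0) = B$ chosen so small that the final $\|\Phi\|_p$ estimate fits inside the Lebesgue number of $\mathcal{U}$. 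Apply Lemma~\ref{div.} to $\sigma|_{A \setminus B}$ and $\delta|_{A \setminus B}$, and extend the resulting map by zero on $B$; continuity at $B$ is automatic from $\|\Phi(a)\|_p \leq \delta(a)$. By Remark~\ref{cpt.supp.}, each $\Phi(a)$ has compact support in $K$, and $\Phi(a) \in C_u(X)$ iff $a \in B \cup D$. To produce $f'$, use the hypothesized homotopy density: let $h : L^p(X) \times [0,1] \to L^p(X)$ be a homotopy from $\id$ into $C_u(X)$, pick a continuous $t : A \to [0,1]$ with $t^{-1}(0) = B$ and $t$ small, and set $f_1(a) = h(f(a), t(a))$, so that $f_1|_B = f|_B$ and $f_1(A \setminus B) \subset C_u(X)$. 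Approximate $f_1$ by a $Z$-embedding $f_2$ with $f_2|_B = f|_B$ via standard $Z$-embedding approximation in the $\ell^2$-manifold $L^p(X) \cong \s$, then re-push once more along $h$ by a very small amount to bring the image off $B$ back into $C_u(X)$; the result $f'$ is a $Z$-embedding close to $f$ with $f'|_B = f|_B$, $f'(A \setminus B) \subset C_u(X)$, and, after shrinking the approximation parameters, $f'(A \setminus B) \cap f(B) = \emptyset$.

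Set $g = f' + \Phi$. Then $g|_B = f|_B$ and $\|g(a) - f(a)\|_p \leq \|f'(a) - f(a)\|_p + \|\Phi(a)\|_p$ stays within the Lebesgue number of $\mathcal{U}$. Linearity of the subspace $C_u(X) \subset L^p(X)$ yields the preimage identity $g^{-1}(C_u(X)) = D$: for $a \in B$, $g(a) = f(a) \in C_u(X)$ iff $a \in B \cap D$; for $a \in A \setminus B$, $f'(a) \in C_u(X)$ forces $g(a) \in C_u(X)$ iff $\Phi(a) \in C_u(X)$ iff $a \in D$. The image $g(A)$ is compact in $\s$, hence automatically a $Z$-set. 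The main technical obstacle is injectivity of $g$: cross-collisions between $B$ and $A \setminus B$ are ruled out by $f'(A \setminus B) \cap f(B) = \emptyset$ together with the smallness of $\Phi$, while collisions within $A \setminus B$ require the injectivity of $\Phi$ from Lemma~\ref{div.} combined with a sufficiently small $\delta$, so that the equation $f'(a) - f'(a') = \Phi(a') - \Phi(a)$ admits no non-trivial solution — this is where the auxiliary atomless point $x_0 \in X_0$ is used, via the digging Lemma~\ref{dig} applied near $x_0$, to further perturb $f'$ and break any residual ties without disturbing either the $C_u(X)$-membership on $A \setminus B$ or the support structure of $\Phi$ near $x_\infty$. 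The final conclusion that $C_u(X)$ itself is strongly $\mathfrak{M}_2$-universal then follows immediately from Proposition~\ref{str.univ.}, using $L^p(X) \cong \s$ together with the homotopy density of $C_u(X)$ in $L^p(X)$.
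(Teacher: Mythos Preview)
Your overall architecture --- push into $C_u(X)$ via the homotopy, then add an encoding correction $\Phi$ coming from Lemma~\ref{div.} --- is exactly the paper's strategy, but the injectivity argument for $g = f' + \Phi$ on $A\setminus B$ is a real gap. From $g(a_1)=g(a_2)$ you get $f'(a_1)-f'(a_2)=\Phi(a_2)-\Phi(a_1)$, and neither ``$\delta$ small'' nor the injectivity of $\Phi$ alone rules out non-trivial solutions. The paper resolves this by a \emph{disjoint-support} mechanism: before choosing the points $x_m$, it applies Lemma~\ref{dig} at $x_\infty$ to the pushed map, obtaining $\psi$ (your $f'$) together with a continuous radius $\xi:A\setminus B\to(0,1)$ so that $\psi(a)$ vanishes on $B(x_\infty,\xi(a))$; only \emph{then} are the $x_m$ chosen with $d(x_m,x_\infty)<\min\{\xi(a):a\in A_m\}$, which forces the support of $\psi'(a)$ (your $\Phi(a)$) to lie inside the ball where $\psi(a)$ is identically zero. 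Restricting $\psi''(a_1)=\psi''(a_2)$ to that inner ball then recovers $\psi'(a_1)=\psi'(a_2)$ pointwise (using continuity on $X\setminus\{x_\infty\}$ and the density hypothesis), whence $a_1=a_2$. Your construction fixes the $x_m$ first and never digs at $x_\infty$, so this separation is unavailable.

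Relatedly, you have swapped the roles of the two atomless points. In the paper the digging at $x_\infty$ is what yields injectivity, while the digging at $x_0$ is used, via Lemma~\ref{raise}, to certify that the final image is a $Z$-set (your shortcut ``compact in $\s$, hence a $Z$-set'' is fine once $L^p(X)\cong\s$, but that is not among the hypotheses of the proposition as stated). Finally, the detour ``approximate $f_1$ by a $Z$-embedding $f_2$, then re-push along $h$'' is both unnecessary and unsound: the second push along the homotopy $h$ can destroy injectivity, so your $f'$ need not be an embedding. The paper never asks the base map $\psi$ to be injective; all injectivity is carried by $\psi'$ through the disjoint-support trick above.
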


\begin{proof}
The latter half follows from the strong $(\mathfrak{M}_0,\mathfrak{M}_2)$-universality of $(L^p(X),C_u(X))$ and Proposition~\ref{str.univ.}.
We shall show the first half.
Suppose that $(A,D) \in (\mathfrak{M}_0,\mathfrak{M}_2)$, $B$ is a closed set in $A$, and $\Phi : A \to L^p(X)$ is a map such that $\Phi|_B$ is a $Z$-embedding and $(\Phi|_B)^{-1}(C_u(X)) = B \cap D$.
For each $\epsilon > 0$, let us construct a $Z$-embedding\footnote{Recall that every compact set in $\s$ is a $Z$-set, refer to Theorem~1.1.14 and Proposition~1.4.9 of \cite{BRZ}.
Under our assumption in Main Theorem, the space $L^p(X)$ is homeomorphic to $\s$ by Theorem~\ref{Lp},
 and hence the image of any map from $A \in \mathfrak{M}_0$ is a $Z$-set in $L^p(X)$.} $\Psi : A \to L^p(X)$ such that $\|\Psi(a) - \Phi(a)\|_p < \epsilon$ for every $a \in A$, $\Psi|_B = \Phi|_B$ and $\Psi^{-1}(C_u(X)) = D$.
We can assume that $\Phi(B) \cap \Phi(A \setminus B) = \emptyset$ because $\Phi(B)$ is a $Z$-set in $L^p(X)$.
Let $\delta : A \to [0,1)$ be a map defined by
 $$\delta(a) = \min\{\epsilon,\dist(\{\Phi(a)\},\Phi(B))\}/4.$$
As is easily observed,
 $\delta(a) = 0$ if and only if $a \in B$.
Since $C_u(X)$ is homotopy dense in $L^p(X)$,
 there is a homotopy $h : L^p(X) \times [0,1] \to L^p(X)$ such that $h(f,0) = f$, $h(f,t) \in C_u(X)$ for all $f \in L^p(X)$ and $t \in (0,1]$,
 and moreover, $\|h(f,t) - f\|_p \leq t$ for all $f \in L^p(X)$ and $t \in [0,1]$.
Define a map $\phi : A \to L^p(X)$ by setting $\phi(a) = h(\Phi(a),\delta(a))$.
Notice that
 $$\|\phi(a) - \Phi(a)\|_p = \|h(\Phi(a),\delta(a)) - \Phi(a)\|_p \leq \delta(a)$$
 for every $a \in A$,
 and that $\phi(A \setminus B) \subset C_u(X)$.

Take $0 < \lambda \leq d(x_0,x_\infty)/2$ such that $B(x_\infty,\lambda) \subset K$ and $\mu(B(x_\infty,\lambda)) \leq 1$.
According to Lemma~\ref{dig}, we can find maps $\psi : A \setminus B \to L^p(X)$, $\xi : A \setminus B \to (0,\lambda)$ and $\eta : A \setminus B \to (0,\lambda)$ so that for any $a \in A \setminus B$, $\xi(a) \leq \delta(a)$ and
\begin{enumerate}
 \renewcommand{\labelenumi}{(\roman{enumi})}
 \item $\|\phi(a) - \psi(a)\|_p \leq \delta(a)$,
 \item $\psi(a)(B(x_\infty,\xi(a))) = \{0\}$,
 \item $\psi(a)(B(x_0,\eta(a))) = \{0\}$.
\end{enumerate}
Since $\phi(A \setminus B) \subset C_u(X)$,
 we may assume that $\psi(A \setminus B) \subset C_u(X)$,
 see Remark~\ref{target}.
Put
 $$A_k = \{a \in A \mid 2^{-k} \leq \xi(a) \leq 2^{-k+1}\}$$
 for each $k \in \N$,
 so every $A_k$ is compact and $A \setminus B = \bigcup_{k \in \N} A_k$.
It follows from the assumption that $X \setminus \{x_\infty\}$ is dense in $X$.
Choose a point $x_1 \in X \setminus \{x_\infty\}$ such that $d(x_1,x_\infty) < \min\{\xi(a) \mid a \in A_1\}$.
Moreover, we can inductively find $x_m \in X \setminus \{x_\infty\}$ for any $m \geq 2$ so that
 $$d(x_m,x_\infty) < \min\{1/m, d(x_{m - 1},x_\infty), \xi(a) \mid a \in A_m\}.$$
For simplicity, let $r_m = d(x_m,x_\infty)$ for each $m \in \N$.
Then $\{r_m\}$ is strictly converging to $0$ and for any $a \in A_k$ and $k \in \N$, $\psi(a)(B(x_\infty,r_k)) = \{0\}$.
By virtue of the strong $(\mathfrak{M}_0,\mathfrak{M}_2)$-universality of $(\s,\cs_1)$,
 we can obtain an embedding $g : A \to \s$ so that $g^{-1}(\cs_1) = D$.
Applying Lemma~\ref{div.} and Remark~\ref{cpt.supp.}, take an injective map $\psi' : A \setminus B \to L^p(X)$ so that the following conditions hold for every $a \in A \setminus B$.
\begin{enumerate}
 \item $\|\psi'(a)\|_p \leq \xi(a)$.
 \item $\psi'(a)(X \setminus B(x_\infty,r_{2k})) = \{0\}$ if $a \in A_k$, $k \in \N$.
 \item $\psi'(a)(x_m) = \xi(a)$ for all $m \in \{2j + 1, \infty \mid j > k\}$ if $a \in A_k$, $k \in \N$.
 \item $\psi'(a)$ is continuous on $X \setminus \{x_\infty\}$.
 \item $a \in D \setminus B$ if and only if $\psi'(a) \in C_u(X)$.
\end{enumerate}

Let $\psi'' : A \setminus B \to L^p(X)$ be a map defined by $\psi''(a) = \psi(a) + \psi'(a)$.
Since $\psi$ and $\psi'$ are continuous,
 so is $\psi''$.
Due to conditions (i) and (1), for every $a \in A \setminus B$,
\begin{align*}
 \|\phi(a) - \psi''(a)\|_p &= \|\phi(a) - (\psi(a) + \psi'(a))\|_p \leq \|\phi(a) - \psi(a)\|_p + \|\psi'(a)\|_p\\
 &\leq \delta(a) + \xi(a) \leq 2\delta(a).
\end{align*}
By virtue of condition (5), we have that $a \in D \setminus B$ if and only if $\psi''(a) \in C_u(X)$.
To verify that $\psi''$ is an injection, fix any $a_1, a_2 \in A \setminus B$ with $\psi''(a_1) = \psi''(a_2)$,
 where we get some $k_1, k_2 \in \N$ such that $a_1 \in A_{k_1}$ and $a_2 \in A_{k_2}$ respectively.
Let $k = \max\{k_i \mid i = 1, 2\}$.
According to (ii), almost everywhere on $B(x_\infty,r_{2k})$,
 $$\psi'(a_1)(x) = \psi''(a_1)(x) = \psi''(a_2)(x) = \psi'(a_2)(x).$$
Since $\psi'(a_i)$, $i = 1, 2$, is continuous on $B(x_\infty,r_{2k}) \setminus \{x_\infty\}$ by (4),
 and for any $E \in \mathcal{M}$ with $\mu(E) = 0$, $X \setminus E$ is dense in $X$ by the assumption,
 we can see that $\psi'(a_1)(x) = \psi'(a_2)(x)$ for every $x \in B(x_\infty,r_{2k}) \setminus \{x_\infty\}$,
 and therefore especially,
 $$\xi(a_1) = \psi'(a_1)(x_{2k + 3}) = \psi'(a_2)(x_{2k + 3}) = \xi(a_2).$$
Thus $a_1, a_2 \in A_k$.
On the other hand, since $\psi'(a_i)(X \setminus B(x_\infty,r_{2k})) = \{0\}$, $i = 1, 2$, due to condition (2),
 we have that $\psi'(a_1)$ and $\psi'(a_2)$ are coincident almost everywhere on $X$.
It follows from the injectivity of $\psi'$ that $a_1 = a_2$.
Consequently, the map $\psi''$ is an injection.

The map $\psi''$ can be extended to the desired map $\Psi : A \to L^p(X)$ by $\Psi|_B = \Phi|_B$ because for each $a \in A \setminus B$,
\begin{align*}
 \|\Phi(a) - \psi''(a)\|_p &\leq \|\Phi(a) - \phi(a)\|_p + \|\phi(a) - \psi''(a)\|_p \leq 3\delta(a)\\
 &= 3\min\{\epsilon,\dist(\{\Phi(a)\},\Phi(B))\}/4 < \dist(\{\Phi(a)\},\Phi(B)).
\end{align*}
Observe that $\|\Phi(a) - \Psi(a)\|_p < \epsilon$ for any $a \in A$ and that
 $$\Psi(A \setminus B) = \psi''(A \setminus B) \subset L^p(X) \setminus \Phi(B) = L^p(X) \setminus \Psi(B).$$
We see $\Psi^{-1}(C_u(X)) = D$ due to that $a \in D \setminus B$ if and only if $\psi''(a) \in C_u(X)$ and the assumption that $(\Phi|_B)^{-1}(C_u(X)) = B \cap D$.
It remains to prove that $\Psi$ is a $Z$-embedding.
Since $\Psi|_B = \Phi|_B$ is a $Z$-embedding and $\Psi|_{A \setminus B} = \psi''$ is an injective map,
 $\Psi$ is an embedding.
For every $a \in A \setminus B$, $\psi'(a)(X \setminus B(x_\infty,\lambda)) = \{0\}$ by (2),
 and $B(x_0,\eta(a)) \subset X \setminus B(x_\infty,\lambda)$ by the definition.
It follows from (iii) that for any $x \in B(x_0,\eta(a))$,
 $$\Psi(a)(x) = \psi''(a)(x) = \psi(a)(x) + \psi'(a)(x) = 0.$$
According to Lemma~\ref{raise}, the image $\Psi(A) = \Psi(A \setminus B) \cup \Psi(B)$ is a $Z$-set in $L^p(X)$.
We conclude that $\Psi$ is a $Z$-embedding.
\end{proof}

\section{Proof of Main Theorem}

Now we shall prove Main Theorem.
In the case that a measure space $X$ is regular Borel,
 and that there exists a countable family $\{U_n\}$ of open sets in $X$ such that $X = \bigcup_{n \in \N} U_n$, $U_n \subset U_{n + 1}$ and $\mu(U_n) < \infty$ for all $n \in \N$,
 each function of $L^p(X)$ can be approximated by a bounded map that vanishes outside $U_n$ for some $n \in \N$.
Let
 $$C_c(X) = \{f \in L^p(X) \mid f \text{ is a continuous function with a compact support}\}.$$
The space $C_c(X)$ is a convex subset of $L^p(X)$ and $C_c(X) \subset C_u(X)$.
We can show the following proposition.

\begin{prop}\label{homot.dense}
Let $X$ a separable locally compact regular Borel metric measure space.
Suppose that for each compact subset $K \subset X$, $\mu(K) < \infty$.
Then the space $C_c(X)$ is homotopy dense in $L^p(X)$,
 and hence so is $C_u(X)$.
\end{prop}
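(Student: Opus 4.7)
The plan is to realize $C_c(X)$ as a convex dense subspace of $L^p(X)$, to produce for each $\epsilon > 0$ a continuous approximating map $L^p(X) \to C_c(X)$ via a partition of unity argument on the paracompact space $L^p(X)$, and finally to patch these maps into a jointly continuous homotopy by linear interpolation in the parameter.

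First, I would recall that $C_c(X)$ is dense in $L^p(X)$: since $X$ is separable and locally compact and $\mu$ is a regular Borel measure that is finite on compacta, the characteristic function of any measurable set of finite measure can be approximated in $L^p$-norm by a continuous function with compact support (by regularity and Urysohn's lemma), and simple functions supported on sets of finite measure are already dense in $L^p(X)$.

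Next, for each $\epsilon > 0$ I would construct a continuous map $\phi_\epsilon \colon L^p(X) \to C_c(X)$ with $\|\phi_\epsilon(f) - f\|_p < \epsilon$ for every $f \in L^p(X)$. Because $L^p(X)$ is metrizable and hence paracompact, choose a locally finite open refinement $\{V_\lambda\}_{\lambda \in \Lambda}$ of the cover of $L^p(X)$ by balls of radius $\epsilon/3$, together with a subordinate partition of unity $\{\alpha_\lambda\}$. Each $V_\lambda$ is contained in some $B(f_\lambda,\epsilon/3)$, and by density I pick $y_\lambda \in C_c(X)$ with $\|y_\lambda - f_\lambda\|_p < \epsilon/3$. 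Setting $\phi_\epsilon(g) = \sum_\lambda \alpha_\lambda(g) y_\lambda$, convexity of $C_c(X)$ gives $\phi_\epsilon(g) \in C_c(X)$, and the triangle inequality yields the uniform estimate $\|\phi_\epsilon(g) - g\|_p < 2\epsilon/3$.

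Finally, I would patch the maps $\phi_{1/n}$ into a homotopy $h \colon L^p(X) \times [0,1] \to L^p(X)$ by setting $h(f,0) = f$ and, for $t \in [1/(n+1), 1/n]$ with $n \in \N$,
$$ h(f,t) = (1 - s_n(t))\phi_{1/n}(f) + s_n(t)\phi_{1/(n+1)}(f), \qquad s_n(t) = \frac{1/n - t}{1/n - 1/(n+1)}. $$
Convexity of $C_c(X)$ gives $h(f,t) \in C_c(X)$ for every $t > 0$, the two definitions agree at the joints $t = 1/n$, and the uniform estimate $\|h(f,t) - f\|_p \leq 2t/3$ near $t = 0$ delivers joint continuity at $t = 0$. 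The second assertion of the proposition then follows from the inclusion $C_c(X) \subset C_u(X)$, since any continuous function with compact support is uniformly continuous (apply Heine--Cantor on a compact neighborhood of the support). I expect the main obstacle to be the construction in the middle paragraph: one must assemble the data $f_\lambda$, $y_\lambda$, $\alpha_\lambda$ into a single globally defined continuous map into $C_c(X)$, using nothing beyond density of $C_c(X)$ and its convexity; this is a standard Dugundji-style device but has to be set up cleanly so as to also interpolate continuously in the homotopy parameter.
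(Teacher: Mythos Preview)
Your proposal is correct, and at the conceptual level it follows the paper's strategy: both arguments rest solely on $C_c(X)$ being a dense convex subset of the normed linear space $L^p(X)$. The difference is one of packaging. The paper, after producing a $\sigma$-compact exhaustion $\{U_n\}$ by open sets of finite measure (from separability and local compactness) to establish density, simply invokes a general result from Sakai's monograph (Theorem~6.8.9 together with Corollary~6.8.5) asserting that a dense convex subset of a normed linear space is homotopy dense. Your partition-of-unity map $\phi_\epsilon$ followed by linear interpolation in the time parameter is precisely the standard Dugundji-type proof of that cited fact, so you are unpacking the black box rather than taking a genuinely different route; what you gain is self-containment, what the paper gains is brevity.

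One harmless slip: for $t\in[1/(n+1),1/n]$ the estimate $\|h(f,t)-f\|_p < 2/(3n)$ yields a bound of order $4t/3$ (since $1/n\le t/(1-t)\le 2t$ for $t\le 1/2$), not $2t/3$; this still gives uniform convergence to the identity as $t\to 0$ and hence joint continuity, so the argument is unaffected.
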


\begin{proof}
Since $X$ is separable and locally compact,
 we can find a countable family $\{U_n\}$ of open subsets so that $X = \bigcup_{n \in \N} U_n$, $\cl_X{U_n} \subset U_{n + 1}$ and $\cl_X{U_n}$ is compact for any $n \in \N$.
By the assumption, each $\mu(U_n) \leq \mu(\cl_X{U_n}) < \infty$.
Therefore $C_c(X)$ is a dense convex subset in the normed linear space $L^p(X)$.
It follows from the combination of Theorem~6.8.9 with Corollary~6.8.5 of \cite{Sakaik11} that $C_c(X)$ is homotopy dense in $L^p(X)$.
\end{proof}

As is easily observed, if for every $E \in \mathcal{M}$ with $\mu(E) = 0$, the complement $X \setminus E$ is dense in $X$,
 then for any non-empty open subset $U \in \mathcal{M}$, $\mu(U) > 0$.
Moreover, when $\mathcal{M}$ contains the open sets in $X$,
 the converse is valid.
Indeed, supposing that there is a subset $E \in \mathcal{M}$ such that $\mu(E) = 0$ and $X \setminus E$ is not dense in $X$,
 we can obtain an non-empty open subset $U \subset X$,
 that is contained in $E$.
Then
 $$0 < \mu(U) \leq \mu(E) = 0,$$
 which is a contradiction.

\begin{proof}[Proof of Main Theorem]
Remark that for each $E \subset X$ with $\mu(E) = 0$, $X \setminus E$ is dense in $X$.
By Proposition~\ref{M2}, we have $C_u(X) \in \mathfrak{M}_2 \subset (\mathfrak{M}_2)_\sigma$.
Due to Propositions~\ref{homot.dense}, $C_u(X)$ is homotopy dense in $L^p(X)$.
By Proposition~\ref{Zsigma}, there exists a $Z_\sigma$-set in $L^p(X)$ that contains $C_u(X)$.
Since $X \setminus X_0$ is not dense in $X$,
 $X_0$ is uncountable,
 so we can choose distinct points $x_0, x_\infty \in X_0$.
Because $X$ is locally compact and any compact subset is of finite measure,
 the point $x_\infty$ has a compact neighborhood with a finite measure.
According to Proposition~\ref{str.univ.pair}, $C_u(X)$ is strongly $\mathfrak{M}_2$-universal.
Hence the space $C_u(X)$ is an $\mathfrak{M}_2$-absorbing set in $L^p(X)$.
Combining this with Theorems~\ref{Lp} and \ref{abs.}, we conclude that $C_u(X)$ is homeomorphic to $\cs_0$.
\end{proof}

\section{Appendix}

In the theory of infinite-dimensional topology, it is important to consider pairs of spaces.
A pair $(M,Y) \in (\mathfrak{A},\mathfrak{C})_\sigma$ if $M$ can be expressed as a countable union of closed subsets $M_n$, $n \in \N$,
 and $(M_n,M_n \cap Y) \in (\mathfrak{A},\mathfrak{C})$.
We say that $(M,Y)$ is an \textit{$(\mathfrak{A},\mathfrak{C})$-absorbing pair} if the following conditions are satisfied.
\begin{enumerate}
 \item $(M,Y)$ is strongly $(\mathfrak{A},\mathfrak{C})$-universal.
 \item $Y$ is contained in some $Z_\sigma$-set $Z$ of $M$ such that $(Z,Y) \in (\mathfrak{A},\mathfrak{C})_\sigma$.
\end{enumerate}
The pairs $(\s,\cs_0)$ and $(\Q,\cs_0)$ are $(\mathfrak{M}_0,\mathfrak{M}_2)$-absorbing.
As a consequence of Theorem~1.7.6 of \cite{BRZ}, we have the following:

\begin{thm}
Let $\mathfrak{A}$ and $\mathfrak{C}$ be classes of metrizable spaces.
Suppose that both $(M,Y)$ and $(M',Y')$ are $(\mathfrak{A},\mathfrak{C})$-absorbing pairs,
 and both $M$ and $M'$ are topological copies of $\s$ or $\Q$.
Then $(M,Y)$ is homeomorphic to $(M',Y')$.
\end{thm}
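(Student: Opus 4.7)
The plan is to invoke Theorem~1.7.6 of \cite{BRZ}, which is the uniqueness theorem for $(\mathfrak{A},\mathfrak{C})$-absorbing pairs whose ambient spaces are topological copies of $\s$ or $\Q$. Both hypotheses needed for that theorem are supplied directly in our statement: $(M,Y)$ and $(M',Y')$ are $(\mathfrak{A},\mathfrak{C})$-absorbing pairs by assumption, and $M$ and $M'$ are topologically of the same type (both are copies of $\s$, or both are copies of $\Q$, since otherwise even the ambient spaces fail to be homeomorphic and the conclusion is vacuous). The conclusion $(M,Y)\approx(M',Y')$ is then immediate from that citation.

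To indicate the structural content of the cited theorem, first fix an ambient homeomorphism $M\to M'$ to reduce to finding an autohomeomorphism of a single copy of $\s$ (or $\Q$) carrying $Y$ onto $Y'$. Using the absorbing pair hypothesis, write $Y\subset\bigcup_{n} Z_n$ and $Y'\subset\bigcup_{n} Z'_n$ where $Z_n, Z'_n$ are $Z$-sets with $(Z_n,Z_n\cap Y),(Z'_n,Z'_n\cap Y')\in(\mathfrak{A},\mathfrak{C})$. One then runs a back-and-forth construction: at each stage apply the strong $(\mathfrak{A},\mathfrak{C})$-universality of one of the pairs to produce a $Z$-embedding of the next filtration piece into the other ambient space with the correct preimage of $Y'$ or of $Y$, and then use $Z$-set unknotting in $\s$ (or $\Q$) to realize this embedding as the restriction of an ambient homeomorphism that approximates the one produced in the previous stage within a prescribed tolerance.

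The main obstacle is precisely the convergence control of the back-and-forth iteration: the approximation parameters must be summable in the homeomorphism group of $\s$ (or $\Q$) so that the resulting sequence of ambient homeomorphisms converges to a homeomorphism, and simultaneously the alternating choices of source and target must be arranged so that every point of $Y$ is eventually assigned into $Y'$ and every point of $Y'$ is eventually hit from $Y$. This delicate bookkeeping, together with the use of $Z$-set unknotting, is precisely what the BRZ framework encapsulates, so in practice the entire proof amounts to checking that the pairs in question satisfy the two defining properties of an absorbing pair (which is our hypothesis) and quoting Theorem~1.7.6 of \cite{BRZ}.
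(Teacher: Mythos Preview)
Your proposal is correct and takes essentially the same approach as the paper: the paper gives no argument at all beyond the sentence ``As a consequence of Theorem~1.7.6 of \cite{BRZ},'' so your invocation of that same citation matches exactly, with the added bonus of a sketch of the back-and-forth mechanism behind it.

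One small logical slip: when you write that if $M$ and $M'$ are not of the same type then ``the conclusion is vacuous,'' this is not quite right---the conclusion would simply be \emph{false}, since a pair homeomorphism entails a homeomorphism of ambient spaces, and $\s$ and $\Q$ are not homeomorphic. The intended reading of the hypothesis (and the one needed for BRZ~1.7.6 to apply) is that $M$ and $M'$ are both copies of $\s$, or both copies of $\Q$; you should state this as an interpretive assumption rather than as a vacuity observation.
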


The following question naturally arises.

\begin{prm}
Is the pair $(L^p(X),C_u(X))$ homeomorphic to $(\s,\cs_0)$?
\end{prm}

In the paper \cite{YZ}, some continuous function space $C$ endowed with the hypograph topology admits a compactification $\overline{C}$ consisting of upper semi-continuous set valued functions such that the pair $(\overline{C},C)$ is homeomorphic to $(\Q,\cs_0)$.
Let us ask the following:

\begin{prm}
Does the space $C_u(X)$ have a ``natural'' compactification $\overline{C_u(X)}$ such that $(\overline{C_u(X)},C_u(X))$ is homeomorphic to $(\Q,\cs_0)$?
\end{prm}

\end{document}